\newtheorem{theorem}{Theorem}[section]
\newtheorem{lemma}[theorem]{Lemma}
\newtheorem{proposition}[theorem]{Proposition}
\theoremstyle{remark}
\numberwithin{equation}{section}
\newcommand{\pfrac}[2]{\left(\frac{#1}{#2}\right)}
\newcommand{\pMatrix}[4]{\left(\begin{matrix}#1 & #2 \\ #3 & #4\end{matrix}\right)}
\renewcommand{\pmatrix}[4]{\left(\begin{smallmatrix}#1 & #2 \\ #3 & #4\end{smallmatrix}\right)}
\renewcommand{\(}{\left(}
\renewcommand{\)}{\right)}
\DeclareMathAlphabet{\matheur}{U}{eur}{m}{n}
\newcommand{\Q}{\mathbb{Q}}
\newcommand{\Z}{\mathbb{Z}}
\DeclareMathOperator{\GL}{GL}
\mathchardef\pFcomma=\mathcode`, 
\begin{document}

\title{A note on cusp forms as $p$-adic limits}

\author{Scott Ahlgren}
\address{Department of Mathematics, University of Illinois at Urbana-Champaign, Urbana,
IL 61801, USA} \email{sahlgren@illinois.edu}
\author{Detchat Samart}
\address{Department of Mathematics, University of Illinois at Urbana-Champaign, Urbana,
IL 61801, USA} \email{dsamart@illinois.edu}


\subjclass[2010]{11F33, 11F11, 11F03}
\keywords{Modular forms, Cusp forms as $p$-adic limits}
\date{\today}
\thanks{The first author was supported by a grant from the Simons Foundation (\#208525 to Scott Ahlgren).}
\maketitle
\begin{abstract}
Several authors have recently proved results which express cusp forms as $p$-adic limits of
weakly holomorphic modular forms under repeated application of Atkin's $U$-operator.
The proofs involve techniques from the theory of weak harmonic Maass forms,
and in particular a result of Guerzhoy, Kent, and Ono 
on the $p$-adic coupling of mock modular forms and their shadows.
Here we obtain strengthened versions of these results using techniques from the theory of holomorphic modular forms.
\end{abstract}
\section{Introduction}

In a recent paper \cite{EO}, El-Guindy and Ono study a cusp form and a modular function related to the elliptic curve $y^2=x^3-x$.
Following their notation, define
\begin{align}
g(z)&=\eta^2(4z)\eta^2(8z)= \sum_{n\ge 1}a(n)q^n=q-2q^5-3q^9+6q^{13}+\cdots,\label{eq:gdef}\\
L(z)&=\frac{\eta^6(8z)}{\eta^2(4z)\eta^4(16z)}=\frac1q+2q^3-q^7-2q^{11}+\cdots,\label{eq:Ldef}\\
{F}(z) &= -g(z)L(2z)=\sum_{n\ge -1}C(n)q^n=-\frac{1}{q}+2 q^3+q^7-2 q^{11}+\cdots.\label{eq:Fdef}
\end{align}
The main result of \cite{EO} states that if  $p\equiv 3\pmod 4$ is a prime for which $p\nmid C(p)$, then as a $p$-adic limit, we have
\begin{equation}\label{eq:elg-ono}
\lim_{m\to\infty}\frac{ F\big | U(p^{2m+1})}{C(p^{2m+1})}=g.
\end{equation}
The proof involves the theory of harmonic Maass forms, and in particular a result of Guerzhoy, Kent, and Ono \cite{GKO}
on the $p$-adic coupling of mock modular forms and their shadows.  Similar results were proved in \cite{GKO} and \cite{BG}.

Our goal is to prove strengthened versions of these results. We use a direct method; it does not involve harmonic Maass forms
but rather an investigation of the action of the Hecke operators on a family of weakly holomorphic modular forms.
A similar approach was recently employed in  the study of the congruences of Honda and Kaneko \cite{AA:heckegrids}.
For the modular forms described above, we prove the following, of which \eqref{eq:elg-ono} is an immediate corollary. Note in addition that the $m=0$ case
of \eqref{eq:vpcp} gives
$p\nmid C(p)$.
Let $v_p(\cdot)$ denote the $p$-adic valuation on $\Z[\![q]\!]$.
\begin{theorem}\label{T:main}
Let $p\equiv 3 \pmod 4$ be  prime. Then for all integers $m\ge 0$ we have
\begin{align}
v_p(C(p^{2m+1})) &= m,\label{eq:vpcp}\\
v_p\left(\frac{{F}| U(p^{2m+1})}{C(p^{2m+1})}-g\right)&\ge m+1.\label{eq:thmmain}
\end{align}
\end{theorem}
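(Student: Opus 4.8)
The plan is to work with the weight-$2$ modular forms for $\Gamma_0(64)$ (or a suitable level dividing $64$) and to track the $U(p)$-action on a small, explicitly understood space. The first move is to identify the relevant finite-dimensional space: $g$ is a cusp form of weight $2$, and $F$ is a weakly holomorphic modular form of weight $2$ with a pole of order $1$ at the cusp $\infty$ and holomorphic elsewhere. Since $p\equiv 3\pmod 4$, the curve $y^2=x^3-x$ is supersingular at $p$, so $a(p)=0$; this is the arithmetic input that drives everything. I would set up the space $M$ spanned by $g$ and by $F$ together with the finitely many weakly holomorphic forms obtained by applying Hecke operators, and observe that modulo the cusp form $g$ the operator $U(p^2)$ acts on the "principal part" data in a controlled way. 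Concretely, writing $F\mid U(p^{2m+1}) = C(p^{2m+1})\,h_m$ for a normalized form $h_m$ (with leading coefficient matching that of $g$), one shows $h_m \to g$ $p$-adically, and the point is to get the exact valuations.

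The key computation is a recursion. Using $U(p)U(p) = U(p^2)$ on $q$-expansions and the Hecke relation at $p$ for weight $2$ (together with $a(p)=0$ for the cuspidal part), I would derive that $C(p^{2m+3})$ is expressed in terms of $C(p^{2m+1})$ plus a term of strictly larger $p$-adic valuation, of the shape $C(p^{2m+3}) = p\cdot(\text{unit})\cdot C(p^{2m+1}) + (\text{error with }v_p \ge \text{bigger})$; more precisely one wants $v_p(C(p^{2m+3})) = v_p(C(p^{2m+1})) + 1$. Starting from $v_p(C(p)) = 0$ (the $m=0$ case, which must be checked by hand from the explicit $q$-expansion of $F$ and the definition of $U(p)$, or deduced from a Galois-representation/Atkin–Lehner argument at level $64$), induction then gives $v_p(C(p^{2m+1})) = m$, which is \eqref{eq:vpcp}. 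For \eqref{eq:thmmain} I would similarly set up a recursion for the normalized error $E_m := F\mid U(p^{2m+1})/C(p^{2m+1}) - g$: applying $U(p^2)$ and using that $g$ is a $U(p^2)$-eigenform with eigenvalue $a(p)^2 - p = -p$ (again because $a(p)=0$), one gets an identity expressing $E_{m+1}$ in terms of $E_m$ with a gain of one power of $p$ coming from the ratio $C(p^{2m+1})/C(p^{2m+3})$, whose valuation is $-1$ by \eqref{eq:vpcp}, against the eigenvalue contribution. This yields $v_p(E_m) \ge v_p(E_{m-1}) + 1$, and so $v_p(E_m) \ge m + 1$ once the base case $v_p(E_0) \ge 1$ is in hand.

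The base cases deserve care. For $v_p(C(p)) = 0$: since $p \equiv 3 \pmod 4$, $g$ is $p$-ordinary-versus-supersingular in the precise sense that $a(p) = 0$, and one checks directly from $F = -g(z)L(2z)$ that the coefficient $C(p)$ equals (up to sign and units) a value not divisible by $p$; concretely $C(p) = -\sum_{\,d\mid p} a(d)\,\ell(\tfrac{p-d}{2}\cdot\text{something})$ from the Rankin–Selberg-type convolution defining $F$, and one extracts a unit. For $v_p(E_0) \ge 1$: one must show $F\mid U(p)/C(p) \equiv g \pmod p$, i.e. that the mod-$p$ reduction of the weakly holomorphic form $F\mid U(p)/C(p)$ is the cusp form $g$; this follows because both are weight-$2$ forms on $\Gamma_0(64)$ mod $p$ with the same principal part at every cusp (the pole of $F$ at $\infty$ is killed mod $p$ after dividing by the unit $C(p)$ and applying $U(p)$, since $U(p)$ lowers the pole order and $p \nmid C(p)$ forces the reduction to be holomorphic), and a holomorphic weight-$2$ form determined by its first coefficient equals $g$ by the dimension count — more carefully, one uses that the space of weakly holomorphic weight-$2$ forms on this level with bounded pole order reduces mod $p$ into a space where $g$ is characterized by its leading term.

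The main obstacle I anticipate is establishing the recursion with the correct power of $p$ — in particular, controlling the error terms when iterating $U(p)$ on the weakly holomorphic form $F$, since $U(p)$ does not preserve the naive space and one must work in an enlarged space of weakly holomorphic forms (or in a suitable $p$-adic Banach space of overconvergent forms) where the relevant operator is still compact and $g$ appears with a controlled "slope". Pinning down that the non-cuspidal part contributes exactly valuation $+1$ per step — rather than $\ge 1$, which would only give \eqref{eq:thmmain} but not the sharp \eqref{eq:vpcp} — is where the structure specific to this example (the identity $F = -g(z)L(2z)$, the modular function $L$, and the factorization of the level $64 = 2^6$) has to be used, presumably through an explicit basis and an explicit matrix for $U(p^2)$ modulo $g$.
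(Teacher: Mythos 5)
Your overall strategy (avoid harmonic Maass forms, work inside an explicit space of weakly holomorphic weight--$2$ forms, and induct on $m$ by applying $U(p^2)$) is the right one and matches the paper's philosophy, but as written the plan has two genuine gaps, each of which is where the actual mathematical content lives. First, the base case $p\nmid C(p)$ for \emph{every} prime $p\equiv 3\pmod 4$ is not something that can be ``checked by hand from the $q$-expansion'' or read off from the convolution $C(p)=-\sum a(j)\ell(\cdot)$ as a ``unit'': it is an infinite family of nonvanishing statements, it was only a speculation in El-Guindy--Ono, and no Galois-representation or Atkin--Lehner argument you sketch produces it. The paper proves it (Lemma~\ref{L:Cp2}) by a genuinely different device: assuming $p\mid C(p)$ forces the weight-$0$ form $\phi_p=q^{-p}+C(p)q+O(q^3)$ to reduce mod $p$ to a series supported on exponents divisible by $p$; multiplying by $f^p$ with $f=\eta^8(32z)/\eta^4(16z)$ yields a holomorphic form $h_p$ with $h_p\equiv h_p|U(p)|V(p)\pmod p$, and the filtration identity $w_p(h_p)=p\,w_p(h_p|U(p))$ pins $w_p(h_p|U(p))=2$, contradicting the absence of a form $q^7+\cdots$ in the eight-dimensional space $M_2(32)$. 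Nothing in your plan substitutes for this step.

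Second, your recursion ``$C(p^{2m+3})=p\cdot(\text{unit})\cdot C(p^{2m+1})+(\text{error with larger valuation})$'' does not close up in the framework you describe. Working ``modulo $g$'' with the eigenvalue relation $g|U(p^2)=-pg$ only controls the cuspidal part; the non-cuspidal part of $F|T_2(p)$ is a new weakly holomorphic form $pF_p$ (with $F_p=-q^{-p}+O(q^3)$), and when you iterate, terms like $F_p|U(p^{2\ell})$ appear whose coefficients you have no handle on --- e.g.\ already $C(p^3)=p\,c_p(p^2)-2pC(p)$ where $c_p(p^2)$ is an uncontrolled coefficient of $F_p$, so you cannot even conclude $v_p(C(p^3))=1$, let alone run the induction. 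The missing idea is that $F|T_2(p)$ is not just \emph{some} element of $M_2^\infty(32)$ but equals $\Theta(\phi_p)$ for a weight-$0$ modular function $\phi_p$ (Bol's identity in the guise $\Theta:M_0^\infty\to M_2^\infty$); since $\Theta$ multiplies the $n$-th coefficient by $n$, one gets $\Theta(\phi_p)|U(p^{2\ell})\equiv 0\pmod{p^{2\ell}}$ for free, and the iteration \eqref{E:Up2} then delivers both the exact valuation in \eqref{eq:vpcp} and, combined with the vanishing $C(n)=0$ for $n\not\equiv 3\pmod 4$ (which forces $F|T_2(p^{2m})=p^{2m}F_{p^{2m}}$), the bound \eqref{eq:thmmain}. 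Appealing instead to overconvergent forms and slopes, as you suggest, is a much heavier machine that you have not set up and that is not needed. (Minor point: the relevant level is $32$, not $64$.)
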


In Theorems~\ref{T:main4} and \ref{T:main3} below we obtain similar improvements of results given in  \cite{GKO} and \cite{BG}.
It is clear that the present approach would give similar results for a number of other spaces of modular forms.

\section{Background}
If $k$ is an  integer, $f$ is a function of the upper half-plane, and
$\gamma=\pmatrix abcd \in \GL_2^+(\Q)$, we define
\[
	f (z)\big|_k \gamma := (\det \gamma)^{k/2} (cz+d)^{-k} f\pfrac{az+b}{cz+d}.
\]
If  $N\ge 1$, $k\in \Z$, and $\chi$ is a Dirichlet character modulo $N$,
let $M_k(N,\chi)$ be the space consisting of  functions $f$ which
satisfy  $f|_k\pmatrix abcd=\chi(d)f$ for all $\pmatrix abcd\in \Gamma_0(N)$ and which are
holomorphic on the upper half plane and at the cusps.  Let  $M_k^!(N,\chi)$
be the space of forms which are
meromorphic at the cusps, and   let $M_k^\infty (N,\chi)$ denote the subspace of   forms which  are holomorphic at all cusps of $\Gamma_0(N)$ other than $\infty$.  We  drop the character from this notation when it is trivial.  Each $f\in M_k^!(N,\chi)$ can be identified with its $q$-expansion; with $q:=\exp(2\pi iz)$ we have
$f(z)=\sum a(n)q^n$ for some coefficients $a(n)$.

For each positive integer $m$, the $U$ and  $V$-operators are defined on $q$-expansions by
\begin{align*}
\sum a(n)q^n \big | U(m) &:= \sum  a(mn)q^n,\\
\sum a(n)q^n \big | V(m) &:= \sum  a(n)q^{mn}.
\end{align*}
Let   $T_{k,\chi}(m)$ be the usual Hecke operator on $M_k^!(N,\chi)$.
If $p$ is prime, then
for $n\geq 1$ and $f\in M_k^!(N,\chi)$ we have
\begin{equation}\label{E:TUV}
f | T_{k,\chi}(p^n) = \sum_{j=0}^n \chi(p^j)p^{(k-1)j} f| U(p^{n-j}) | V(p^j).
\end{equation}
Define
\begin{equation*}
\Theta : =\frac1{2\pi i}\frac{d}{dz}=q \frac{d}{dq}.
\end{equation*}

\begin{lemma}\label{P:HD}
If $(m,N)=1$, then we have
\begin{equation}
T_{k,\chi}(m) : M_k^\infty (N,\chi) \rightarrow M_k^\infty (N,\chi).
\label{E:T}
\end{equation}
If $k\geq 2$ then
\begin{equation} \Theta^{k-1} : M_{2-k}^\infty (N,\chi) \rightarrow M_k^\infty (N,\chi). \label{E:Th}
\end{equation}
\end{lemma}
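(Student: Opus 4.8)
The plan is to verify each of the two assertions by tracking what happens at the cusps, since the relevant operators already preserve the modular transformation law. For \eqref{E:T}: when $(m,N)=1$, the Hecke operator $T_{k,\chi}(m)$ maps $M_k^!(N,\chi)$ to itself, so the only thing to check is that a form holomorphic at every cusp except possibly $\infty$ is sent to another such form. I would reduce to the case $m=p$ prime and use the classical decomposition of $T_{k,\chi}(p)$ into a sum over the coset representatives $\pmatrix 1j0p$ for $0\le j<p$ together with (when $p\nmid N$) the representative $\pMatrix p00 1$ twisted by $\chi(p)$. The key point is that since $\gcd(p,N)=1$, each of these matrices permutes (up to $\Gamma_0(N)$-equivalence and scaling) the cusps of $\Gamma_0(N)$ in a way that fixes the class of $\infty$; more precisely, if $f$ is holomorphic at a cusp $\mathfrak a\ne\infty$, then each term $f|_k\gamma$ appearing in $T_{k,\chi}(p)f$ is holomorphic there as well, because $\gamma$ applied to a cusp representative for $\mathfrak a$ lands at a cusp $\ne\infty$. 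Summing finitely many forms that are holomorphic at $\mathfrak a$ keeps holomorphy at $\mathfrak a$, which gives \eqref{E:T}.

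For \eqref{E:Th}: the operator $\Theta^{k-1}$ raises weight from $2-k$ to $k$, and the relevant fact is Bol's identity, which says that $\Theta^{k-1}$ intertwines the weight-$(2-k)$ and weight-$k$ slash actions of $\GL_2^+(\Q)$ up to the automorphy factor; concretely, $(\Theta^{k-1}f)|_k\gamma=\Theta^{k-1}(f|_{2-k}\gamma)$ for all $\gamma\in\GL_2^+(\Q)$. This is a purely formal computation with the chain rule (differentiating $(cz+d)^{k-2}$ repeatedly produces terms that telescope), and it immediately shows that $\Theta^{k-1}$ sends forms transforming under $\Gamma_0(N)$ with character $\chi$ in weight $2-k$ to forms transforming the same way in weight $k$. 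It remains to analyze the cusps. At $\infty$, applying $\Theta^{k-1}$ to $\sum a(n)q^n$ yields $\sum n^{k-1}a(n)q^n$, which is allowed to have a pole (we are only asserting membership in $M_k^\infty$, not $M_k$). At any other cusp $\mathfrak a$, one expands $f|_{2-k}\sigma_{\mathfrak a}$ in a local parameter $q_h=e^{2\pi i z/h}$; since $f$ is holomorphic there, this expansion has no negative terms, and $\Theta^{k-1}$ acting on a power series in $q_h$ (again multiplying the $n$th coefficient by something proportional to $n^{k-1}$) still has no negative terms. Hence $\Theta^{k-1}f\in M_k^\infty(N,\chi)$.

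The main obstacle is bookkeeping at the cusps in the first part: one must be careful that the coset representatives for $T_{k,\chi}(p)$, when conjugated to a neighborhood of an arbitrary cusp $\mathfrak a\ne\infty$, do not accidentally produce a term supported at $\infty$. This is exactly where the hypothesis $(m,N)=1$ is used — it guarantees that $T_{k,\chi}(m)$ descends to a well-defined operator on forms for $\Gamma_0(N)$ and that the induced action on the set of cusps fixes the orbit of $\infty$. The second part is essentially automatic once Bol's identity is in hand, since differentiation can only improve (or leave unchanged) the order of vanishing at a cusp other than $\infty$, and we have deliberately allowed poles at $\infty$.
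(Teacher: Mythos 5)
Your plan matches the paper's proof in both parts: the same coset decomposition of $T_{k,\chi}(p)$ into the representatives $\left(\begin{smallmatrix}1&j\\0&p\end{smallmatrix}\right)$ and $\left(\begin{smallmatrix}p&0\\0&1\end{smallmatrix}\right)$ with the same cusp-tracking argument (the paper makes the bookkeeping you flag as the main obstacle explicit, factoring each representative times a scaling matrix for the cusp as an integral matrix sending $\infty$ to a cusp inequivalent to $\infty$ — which uses $(p,N)=1$ exactly as you say — times an upper triangular matrix), and the same use of Bol's identity plus the intertwining relation $(\Theta^{k-1}f)\big|_k\gamma=\Theta^{k-1}\bigl(f\big|_{2-k}\gamma\bigr)$ for the second part. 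The only cosmetic difference is that the paper derives Bol's identity from iterated Maass raising operators rather than a direct chain-rule computation.
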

\begin{proof}
For the first statement,  it suffices to show that for each prime $p\nmid N$ we have
\begin{equation*}
T_{k,\chi}(p) : M_k^\infty (N,\chi) \rightarrow M_k^\infty (N,\chi).
\end{equation*}
We have
\begin{equation}\label{E:Hecke}
f|T_{k,\chi}(p) = p^{\frac k2-1}\left(\sum_{j=0}^{p-1} f\big |_k
\pMatrix 1j0p
 +  \chi(p) f\big |_k
 \pMatrix p001
\right).
\end{equation}
Let $r\in\mathbb{Q}$ be a cusp of $\Gamma_0(N)$ inequivalent to $\infty$ and choose $\gamma=
\pmatrix abcd
\in \operatorname{SL}_2(\mathbb{Z}) \backslash \Gamma_0(N)$ with $\gamma\infty = r$.
Given $j\in \{0,\dots,p-1\}$ set  $\lambda:=(a+c j, p)$.
By a standard argument
 (see e.g.  \cite[\S 6.2]{Iwaniec}) we find that
\[
\pMatrix 1j0p
\pMatrix abcd
=\pMatrix
  {\frac{a+cj}{\lambda}}{*}
  {\frac{cp}{\lambda}}{*}
\pMatrix   \lambda  *
  0 {\frac{p}\lambda}
 \]
 where the first matrix on the right is in  $\operatorname{SL}_2(\mathbb{Z}) \backslash \Gamma_0(N)$.
 It follows that each term from the sum on $j$ in  \eqref{E:Hecke} is holomorphic at cusps other than $\infty$.
To see that the last summand is also holomorphic at these cusps,  let  $\lambda':=(p, c)$.  Then
\[
\pMatrix p001
\pMatrix abcd
=\pMatrix
  {\frac{ap}{\lambda'}}*
  {\frac{c}{\lambda'}}*
\pMatrix
  {\lambda'}*0{\frac{p}{\lambda'}}
 \]
 where the first matrix on the right is in  $\operatorname{SL}_2(\mathbb{Z}) \backslash \Gamma_0(N)$.

Let $R_k$ be the Maass raising operator in weight $k$, so that we have the basic relation
\[R_{k-2}\(f \big|_{k-2}\gamma\)=\(R_{k-2} f\)\big|_{k}\gamma.\]
Bol's identity (see for example \cite[Lemma 2.1]{BOR}) states that for $k\geq 2$ we have
\[\Theta^{k-1}=\frac1{(-4\pi)^{k-1}}R_{k-2}\circ R_{k-4}\circ\cdots\circ R_{4-k}\circ R_{2-k}.\]
It follows that
\[\Theta^{k-1} : M_{2-k}^!(N,\chi) \rightarrow M_k^! (N,\chi)\]
and that
\[\(\Theta^{k-1}f\)\big|_k\gamma=\Theta^{k-1}\(f\big|_{2-k}\gamma\).
\]
The claim \eqref{E:Th} follows from these two facts.
\end{proof}

If $p\nmid 6N$ and $k\geq 0$, let $M_k^{(p)}(N)$ denote the subset of $M_k(N)$ consisting of forms whose coefficients are $p$-integral rational numbers.
If $f\in M_k^{(p)}(N)$, define the filtration
\[
w_p(f):=\inf\{k' : f\equiv g \pmod p \text{ for some }g \in M_{k'}^{(p)}(N)\}.
\]
We require two  facts, which can be found for example in \cite[\S1]{Joch}.
First, if $f\in M_k^{(p)}(N)$ and $w_p(f)\neq -\infty$, then $w_p(f)\equiv k\pmod{p-1}$.
Also, we have
\begin{equation}\label{E:fil1}
w_p\(f|V(p)\) = p \,w_p(f).
\end{equation}

\section{Proof of Theorem~\ref{T:main}}\label{sec:main}
  Recall  the definitions \eqref{eq:gdef}--\eqref{eq:Fdef},
and note that $F=F_1$ and $g=-F_{-1}$ in the notation of the next proposition.
\begin{proposition}\label{T:T2}
We have the following.
\begin{enumerate}
\item For every odd  integer $m\geq -1$  there exists a unique  $F_m\in M_2^{\infty}(32)\bigcap\Z[\![q]\!]$
of the form
\[F_m= -q^{-m}+O(q^3).\]
\item
Suppose that  $p$ is an odd prime and that $n\geq 0$. Then
\begin{equation*}
{F} | T_2(p^n) = p^n F_{p^n}+C(p^n)g.
\end{equation*}
\end{enumerate}
\end{proposition}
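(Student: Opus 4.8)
The plan is to do part~(1) by an explicit recursion for the $F_m$, and part~(2) --- the real content --- by pinning down the principal part of $F\,|\,T_2(p^n)$ and recognizing the leftover as a multiple of $g$.

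\emph{Part (1).} First I would gather the level-$32$ arithmetic that makes it go: $g=\eta^2(4z)\eta^2(8z)$ spans the one-dimensional space $S_2(\Gamma_0(32))$ of weight-$2$ cusp forms ($X_0(32)$ has genus~$1$), and $L(2z)$ is a modular function for $\Gamma_0(32)$ with integral $q$-expansion, holomorphic at every cusp other than $\infty$, with a double pole there --- both routine eta-quotient facts (orders at the cusps from Ligozat's formula). I would also note the congruence pattern: the exponents in $g$ are all $\equiv1\pmod4$ and those in $L(2z)$ all $\equiv2\pmod4$, so those in $g\,L(2z)^k$ are all $\equiv 1-2k\pmod4$, i.e.\ all congruent to the leading exponent $1-2k$. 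Since $g$ vanishes at every cusp, each $g\,L(2z)^k$ lies in $M_2^\infty(32)\cap\Z[\![q]\!]$, vanishes at all cusps except $\infty$, and has a pole of order $2k-1$ there. Now build the $F_m$ by induction on odd $m$: put $F_{-1}:=-g$, $F_1:=F=-g\,L(2z)$, and for odd $m\ge3$ take $-g\,L(2z)^{(m+1)/2}=-q^{-m}+\cdots$ and subtract the unique integral combination of the already-constructed $F_{m'}$ (with $-1\le m'<m$, $m'\equiv m\pmod4$) that kills the coefficients of $q^{-m+1},\dots,q^2$. Because everything in sight has exponents in a single class mod~$4$, those intermediate coefficients can be nonzero only at exponents $-m'$ with $m'$ in exactly that range, so the available $F_{m'}$ suffice, and the output $F_m=-q^{-m}+O(q^3)$ lies in $M_2^\infty(32)\cap\Z[\![q]\!]$, vanishes at all finite cusps, and has only odd exponents. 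Uniqueness then follows because two such forms differ by an element of $M_2^\infty(32)\cap\Z[\![q]\!]$ which is $O(q^3)$ --- hence holomorphic everywhere, vanishing to order $\ge3$ at $\infty$, with only odd exponents and vanishing at the finite cusps, so a multiple of $g=q+O(q^5)$, which vanishing to order $\ge3$ forces to be $0$.

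\emph{Part (2).} Since $p$ is odd, $(p^n,32)=1$, so by Lemma~\ref{P:HD} $T_2(p^n)$ maps $M_2^\infty(32)$ to itself; rerunning the coset computation in that proof shows it also preserves the subspace of forms vanishing at all cusps other than $\infty$, and \eqref{E:TUV} shows $F\,|\,T_2(p^n)$ has integral coefficients and, like $F$, only odd exponents. Next, read the principal part off \eqref{E:TUV}: in $F\,|\,T_2(p^n)=\sum_{j=0}^n p^j\,F\,|\,U(p^{n-j})\,|\,V(p^j)$, each summand with $j<n$ involves $F\,|\,U(p^{n-j})$, whose exponents are all $\ge1$ (the one negative exponent of $F$ is $-1$, which $p^{n-j}$ does not divide), while the $j=n$ summand is $p^nF\,|\,V(p^n)=-p^nq^{-p^n}+O(q^{3p^n})$; so $F\,|\,T_2(p^n)=-p^nq^{-p^n}+(\text{terms of exponent}\ge1)$. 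Hence $F\,|\,T_2(p^n)-p^nF_{p^n}\in M_2^\infty(32)$ has all exponents $\ge1$, so it is holomorphic at $\infty$; being in addition a form with only odd exponents vanishing at all finite cusps, it is a cusp form, hence $c\,g$ for some scalar $c$. To find $c$, compare coefficients of $q^1$: by the Hecke eigenvalue formula (equivalently, only the $j=0$ term of \eqref{E:TUV} contributes to $q^1$) the $q^1$-coefficient of $F\,|\,T_2(p^n)$ is $C(p^n)$, while that of $F_{p^n}=-q^{-p^n}+O(q^3)$ is $0$, so $c=C(p^n)$, giving the identity. The case $n=0$ is trivial: $T_2(1)=\mathrm{id}$, $F_{p^0}=F$, $C(1)=0$.

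\emph{Main obstacle.} No single step is hard, but everything rests on two structural inputs, and these are where I would concentrate: (a) that $L(2z)$ is holomorphic at every cusp of $\Gamma_0(32)$ other than $\infty$ --- this keeps $g\,L(2z)^k$ in $M_2^\infty(32)$ and underpins both the construction in part~(1) and the fact that odd-exponent forms in $M_2^\infty(32)$ vanishing at the finite cusps are spanned by the $g\,L(2z)^k$; and (b) that a holomorphic weight-$2$ form on $\Gamma_0(32)$ with only odd exponents must be a scalar multiple of $g$ (equivalently, $\dim S_2(\Gamma_0(32))=1$ together with $g$ having only odd exponents). Fact~(b) is exactly what forces $F\,|\,T_2(p^n)-p^nF_{p^n}$ to be a multiple of $g$ rather than some combination of cusp and Eisenstein forms; it is the point where the specific arithmetic of level~$32$ enters.
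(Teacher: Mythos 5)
Your argument is, in substance, the paper's own: the $F_m$ are built as integral linear combinations of the forms $E_r=-g\,L(2z)^r$ using the fact that their exponents lie in a single class modulo $4$, uniqueness comes down to $\dim S_2(\Gamma_0(32))=1$, and part (2) is read off from \eqref{E:TUV} by matching the principal part against $-p^nq^{-p^n}$ and the coefficient of $q$ against $C(p^n)$. The only structural difference is bookkeeping: the paper folds the last step of part (2) into the uniqueness assertion of part (1), whereas you argue directly that $F|T_2(p^n)-p^nF_{p^n}$ is a cusp form.

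One of the two ``structural inputs'' you single out is, however, false as stated: it is not true that a holomorphic weight-$2$ form on $\Gamma_0(32)$ supported on odd exponents must be a multiple of $g$. For instance the Eisenstein series $\sum_{n\ge1}\bigl(\sum_{d\mid n}\chi_{-4}(d)\chi_{-4}(n/d)\,d\bigr)q^n=q-4q^3+6q^5-\cdots$ (with $\chi_{-4}$ the nontrivial character modulo $4$) lies in $M_2(\Gamma_0(16))\subseteq M_2(\Gamma_0(32))$, has only odd exponents, and is not cuspidal. Your proof survives because the hypothesis that actually does the work in your own paragraph is not ``odd exponents'' but ``vanishing at every cusp other than $\infty$'': this holds for each $g\,L(2z)^r$ (as $g$ is a cusp form and $L(2z)$ is holomorphic away from $\infty$), it is preserved by $T_2(p^n)$ (in the coset computation of Lemma~\ref{P:HD} the cusps $\gamma'\infty$ that arise have denominators $c$ or $cp$ with $p$ odd, hence are never $\Gamma_0(32)$-equivalent to $\infty$), and vanishing at all finite cusps together with an expansion supported on exponents $\ge 1$ places the difference in $S_2(32)=\CC\, g$. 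So the appeal to (b) should simply be deleted; it is both false and unnecessary. The same caveat touches your uniqueness paragraph: for two arbitrary competitors in $M_2^{\infty}(32)\cap\ZZ[\![q]\!]$ of the shape $-q^{-m}+O(q^3)$ the difference need not have odd exponents or vanish at the finite cusps (one could add $\eta^8(32z)/\eta^4(16z)=q^8+\cdots\in M_2(32)$), so uniqueness should be understood, as it is implicitly in the paper, within the class of forms vanishing at all cusps other than $\infty$ --- which is the only class ever used.
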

\begin{proof}

For each integer $r\ge 0$, let
\begin{equation*}
E_r(z) = -g(z)L^r(2z)=-\frac{\eta^2(4z)\eta^{6r}(16z)}{\eta^{2r-2}(8z)\eta^{4r}(32z)}= -q^{-2r+1}+2q^{-2r+5}+O(q^{-2r+9}).
\end{equation*}
Using standard criteria (see, e.g. \cite[Thm. 1.64, Thm. 1.65]{O:web}) we find that
 $E_r\in M_2^{\infty}(32)$.  The forms $F_m$ can then be constructed as linear combinations of forms $E_r$ with $2r-1\equiv m \pmod 4$.
 Uniqueness follows since the space $S_2(32)$ is one-dimensional.  This gives the first assertion.

From \eqref{E:TUV} we have
\[
F|T_2(p^n)=F|U(p^n)+\sum_{j=1}^{n-1}p^j F| U(p^{n-j}) | V(p^j)+p^nF|V(p^n).\]
Observe that
 \[
 {F}| U(p^n)=C(p^n)q + O(q^3)= C(p^n)g +O(q^3)
 \]
 and that
  \[
\sum_{j=1}^{n-1} p^j {F}| U(p^{n-j}) | V(p^j)+ p^n {F}| V(p^n)  = -p^n q^{-p^n} +O(q^3).
  \]
Assertion (2)  follows from assertion (1) together with Lemma~\ref{P:HD}.
\end{proof}

Before proving Theorem~\ref{T:main} we require two lemmas.
\begin{lemma}\label{L:Cp}
For each prime $p\equiv 3 \pmod 4$ and each integer $m\ge 0$ we have
\begin{equation*}\label{E:Cp}
C(p^{2m+1}) \equiv (-1)^m p^m \, C(p) \pmod {p^{m+1}}.
\end{equation*}
\end{lemma}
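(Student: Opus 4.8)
The plan is to distill from Proposition~\ref{T:T2} a single congruence linking $C(p^{n+1})$ to $C(p^{n-1})$ modulo $p^n$, and then to iterate it. Two elementary facts about $g$ are needed at the outset. Since
\[
g(z)=\eta^2(4z)\eta^2(8z)=q\prod_{n\ge1}(1-q^{4n})^2(1-q^{8n})^2,
\]
the product on the right is a power series in $q^4$, so every exponent occurring in the $q$-expansion of $g$ is $\equiv 1\pmod 4$; in particular $a(n)=0$ whenever $n\equiv 3\pmod 4$, and so $a(p)=0$ for our primes. Moreover $S_2(\Gamma_0(32))$ is one-dimensional, so $g$ is (up to scaling) its only element and thus a Hecke eigenform, with $g\,|\,T_2(p)=a(p)g=0$.

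Next I would apply the weight-$2$ Hecke identity $T_2(p^{n+1})=T_2(p)T_2(p^n)-p\,T_2(p^{n-1})$ (valid for $n\ge1$, as $p\nmid 32$) to $F$, substituting $F\,|\,T_2(p^j)=p^jF_{p^j}+C(p^j)g$ from Proposition~\ref{T:T2}(2). Because $g\,|\,T_2(p)=0$, the term $C(p^n)\bigl(g\,|\,T_2(p)\bigr)$ vanishes, and gathering the two multiples of $g$ on one side yields the identity of $q$-series
\[
\bigl(C(p^{n+1})+p\,C(p^{n-1})\bigr)g=p^n\bigl(F_{p^n}\,|\,T_2(p)-F_{p^{n-1}}\bigr)-p^{n+1}F_{p^{n+1}}.
\]
By Proposition~\ref{T:T2}(1) the $F_m$ have integer coefficients, and $T_2(p)$ (trivial character, $p\nmid 32$) acts on $q$-expansions by $\sum a(j)q^j\mapsto\sum\bigl(a(pj)+p\,a(j/p)\bigr)q^j$ and hence preserves integrality; thus every coefficient of the right-hand side is divisible by $p^n$. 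Comparing coefficients of $q^1$, where $g$ has coefficient $1$, gives
\[
C(p^{n+1})\equiv -p\,C(p^{n-1})\pmod{p^n}\qquad(n\ge1).
\]

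Finally I would set $n=2m$ and induct on $m\ge0$. The case $m=0$ is vacuous. For $m\ge1$, the displayed congruence gives $C(p^{2m+1})\equiv -p\,C(p^{2m-1})\pmod{p^{2m}}$, hence a fortiori modulo $p^{m+1}$; feeding in the inductive hypothesis $C(p^{2m-1})\equiv(-1)^{m-1}p^{m-1}C(p)\pmod{p^m}$ after multiplying it through by $-p$ — which upgrades a congruence mod $p^m$ to one mod $p^{m+1}$ — produces $C(p^{2m+1})\equiv(-1)^mp^mC(p)\pmod{p^{m+1}}$, as desired. I do not foresee a real obstacle: the genuine content is already packaged into Proposition~\ref{T:T2}, and the only delicate points are justifying $a(p)=0$ (which the $\eta$-product expansion settles at once) and tracking the modulus through the telescoping so that the exponent truly climbs from $m$ to $m+1$ at the last step rather than stagnating.
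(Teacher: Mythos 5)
Your proof is correct, but it takes a genuinely different route from the paper's. The paper imports the weight-zero modular function $\phi_p=q^{-p}+C(p)q+O(q^3)\in M_0^\infty(32)$ from El-Guindy--Ono, applies $\Theta$ (via Bol's identity) to identify $F|T_2(p)$ with $\Theta(\phi_p)$, and then iterates $U(p^2)$ on the resulting relation $F|U(p)=\Theta(\phi_p)-p\,F|V(p)$; the $p$-power divisibility there comes from $\Theta(\phi_p)|U(p^{2k})\equiv 0\pmod{p^{2k}}$, since $\Theta$ multiplies the $n$th coefficient by $n$. You instead stay entirely inside Proposition~\ref{T:T2}, feeding the decomposition $F|T_2(p^j)=p^jF_{p^j}+C(p^j)g$ into the Hecke recursion $T_2(p^{n+1})=T_2(p)T_2(p^n)-pT_2(p^{n-1})$ and using $g|T_2(p)=0$, which you correctly justify from $a(p)=0$ for $p\equiv 3\pmod 4$ (visible from the $\eta$-product being supported on exponents $\equiv 1\pmod 4$) together with $\dim S_2(32)=1$; the divisibility then comes from the explicit factor $p^n$ multiplying the integral forms $F_{p^n}$, whose integrality is preserved by $T_2(p)$. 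Both arguments are sound. Yours is more self-contained (no $\phi_p$, no $\Theta$, no appeal to the construction in \cite{EO}) and generalizes immediately to any setting where the target cusp form is annihilated by $T(p)$; the paper's route has the side benefit that the intermediate identity $F|T_2(p)=\Theta(\phi_p)$ is reused in the proof of Lemma~\ref{L:Cp2}, and its congruence \eqref{E:PT} holds for the whole form $F|U(p^{2m+1})$ rather than only for the coefficient of $q$. One cosmetic point: the $m=0$ base case is not vacuous but trivially true ($C(p)\equiv C(p)\pmod p$); this does not affect the argument.
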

\begin{proof}
Lemma 2.3 and Corollary 2.4 of \cite{EO} show that for each $p\equiv 3\pmod 4$,  there is a modular function $\phi_p\in M_0^\infty(32)$
of the form
\begin{equation}\label{eq:phi_p}
\phi_p(z) = q^{-p} +C(p)q+O(q^3)
\end{equation}
 (we have corrected a sign error in the proof of the corollary).
 From Lemma~\ref{P:HD} we have
 \[
\Theta(\phi_p)  = -pq^{-p}+C(p)q+O(q^3)\in M_2^\infty(32).
\]
On the other hand,  Proposition~\ref{T:T2} gives
\[
{F}| T_2(p) = -pq^{-p}+C(p)q +O(q^3).
\]
Therefore
\begin{equation}\label{E:T2T}
{F}| T_2(p) = \Theta(\phi_p),
\end{equation}
or equivalently
\begin{equation}\label{E:Up1}
{F}| U(p) = \Theta(\phi_p)-p\,{F}| V(p).
\end{equation}
Applying $U(p^2)$ to both sides of \eqref{E:Up1} and arguing  inductively, we obtain the following for each $m\geq 0$:
\begin{equation}\label{E:Up2}
{F}| U(p^{2m+1}) = \sum_{k=0}^m (-1)^{m-k} p^{m-k} \Theta(\phi_p)| U(p^{2k})+(-1)^{m+1}p^{m+1}{F}| V(p).
\end{equation}

For any $k\ge 0$ we have $\Theta(\phi_p)| U(p^{2k}) \equiv 0 \pmod {p^{2k}}$.
Therefore for each each $m\ge 0$ we have
\begin{equation}\label{E:PT}
{F} | U(p^{2m+1}) \equiv (-1)^m p^m \Theta(\phi_p)  \pmod {p^{m+1}}.
\end{equation}
The lemma follows by
comparing  coefficients of $q$ in  \eqref{E:PT}.
\end{proof}

The authors of   \cite{EO} speculated that $v_p(C(p))=0$ for every prime $p\equiv 3 \pmod 4$.  We prove that this is the case.
\begin{lemma}\label{L:Cp2}
For each prime $p\equiv 3 \pmod 4$ we have $p\nmid C(p)$.
\end{lemma}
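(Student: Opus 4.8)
The plan is to argue by contradiction, so suppose $p\mid C(p)$; we may assume $p\ge 5$, since $C(3)=2$ by \eqref{eq:Fdef} and the claim is then trivial for $p=3$. As in the proof of Lemma~\ref{L:Cp}, combining part (2) of Proposition~\ref{T:T2} (with $n=1$) with \eqref{E:T2T} yields $\Theta(\phi_p)=p\,F_p+C(p)\,g$. Every coefficient of $p\,F_p$ is divisible by $p$, and by hypothesis so is $C(p)$; hence $\Theta(\phi_p)\equiv 0\pmod p$, i.e.\ every Fourier coefficient of $\phi_p$ at an exponent prime to $p$ is divisible by $p$.

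The next step is to clear the pole of $\phi_p$. Since $g$ is a cusp form vanishing to order $1$ at the cusp $\infty$, while $\phi_p\in M_0^{\infty}(32)$ has a pole of order exactly $p$ at $\infty$ and is holomorphic at all other cusps, the product $g^{p}\phi_p$ is a \emph{holomorphic} form in $M_{2p}(32)$, and it has $p$-integral coefficients (using that $\phi_p$ does, from its construction in \cite{EO}). From $g^{p}\equiv g|V(p)\pmod p$ and $\Theta\bigl(g|V(p)\bigr)=p\,(\Theta g)|V(p)$ we get $\Theta(g^{p})\equiv 0\pmod p$, so
\[
\Theta\bigl(g^{p}\phi_p\bigr)=\Theta(g^{p})\,\phi_p+g^{p}\,\Theta(\phi_p)\equiv 0\pmod p .
\]
By the structure theory of the $\Theta$-operator on mod $p$ modular forms (the same circle of ideas as the filtration facts quoted from \cite[\S1]{Joch}), a form killed by $\Theta$ is a $p$-th power; thus $\overline{g^{p}\phi_p}=\overline v^{\,p}$ for a mod $p$ modular form $\overline v$ of weight $2$ on $\Gamma_0(32)$. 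Dividing by $\overline g^{\,p}=\overline{g^{p}}$ gives a rational function $\overline\rho:=\overline v/\overline g$ on $X_0(32)_{\FF_p}$ with $\overline\rho^{\,p}=\overline{\phi_p}$.

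To finish, I would invoke the geometry of $X_0(32)$. As $p\nmid 32$, the curve $X_0(32)$ has good reduction at $p$, so $X_0(32)_{\FF_p}$ is smooth of genus $1$ and $\overline{\phi_p}$ is a rational function on it, regular away from the cusp $\infty$, with a pole of order exactly $p$ at $\infty$ (the coefficient of $q^{-p}$ in $\phi_p$ being a $p$-adic unit). Therefore $\overline\rho$ is regular away from $\infty$ with a simple pole there, and such a function gives an isomorphism $X_0(32)_{\FF_p}\xrightarrow{\ \sim\ }\PP^1$, contradicting genus $1$. Hence $p\nmid C(p)$.

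The main obstacle, I expect, is the bookkeeping in the middle step: checking that $\phi_p$ (and hence $g^{p}\phi_p$) has $p$-integral coefficients and that reduction modulo $p$ is compatible with all the operations used, together with the standard but slightly technical point that good reduction of $X_0(32)$ lets one treat $\overline{\phi_p}$ as an honest rational function on a smooth genus-$1$ curve whose only pole lies above $\infty$.
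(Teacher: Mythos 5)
Your argument is correct in substance and its first half coincides with the paper's: both start from $\Theta(\phi_p)=pF_p+C(p)g\equiv 0\pmod p$ and then clear the pole of $\phi_p$ by multiplying by a $p$-th power of a holomorphic form so as to land in $M_{2p}(32)$ with a $q$-expansion supported mod $p$ on exponents divisible by $p$. Where you diverge is in the endgame. The paper multiplies by $f^p$ with $f=\eta^8(32z)/\eta^4(16z)=q^8+\cdots$, so that the filtration argument produces a weight-$2$ form congruent to $q^7+O(q^8)$, which is then ruled out by inspecting an explicit basis of the $8$-dimensional space $M_2(32)$ --- a purely computational, $q$-expansion-level contradiction. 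You instead multiply by $g^p$, extract a $p$-th root $\bar v$ in the kernel of $\theta$, and derive the contradiction geometrically: $\bar v/\bar g$ would be a rational function on the genus-one curve $X_0(32)_{\FF_p}$ with a single simple pole, which Riemann--Roch forbids. Your route is more conceptual and explains \emph{why} the paper's basis check must come out the way it does, at the cost of heavier inputs (the $q$-expansion principle or an explicit integral model argument to see that $\overline{\phi_p}$ stays regular away from $\infty$ on the special fiber, and the identification of mod $p$ forms with sections on the reduced curve). Two small points you should make explicit: (i) the assertion that $\bar v$ has weight $2$ is not automatic from ``kernel of $\theta$ consists of $p$-th powers''; you need the same filtration computation the paper performs, namely $w_p(g^p\phi_p)\equiv 2\pmod{p-1}$, $p\mid w_p(g^p\phi_p)$, and $w_p(g^p\phi_p)\le 2p$ with the form nonzero mod $p$ (its constant term is $1$), forcing $w_p(g^p\phi_p)=2p$; this is also exactly where your exclusion of $p=3$ earns its keep, since for $p=3$ the filtration could a priori drop to $0$. (ii) Note that stopping at $\bar v=(g^p\phi_p)|U(p)\equiv 1+O(q)$ would give no contradiction (Eisenstein series of weight $2$ on $\Gamma_0(32)$ have nonzero constant term), so the division by $\bar g$ and the passage to the curve are genuinely necessary in your setup --- the paper's choice of $f$ starting at $q^8$ is precisely what lets it avoid this extra step.
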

\begin{proof}
Assume to the contrary that $p \mid C(p).$ From  \eqref{E:T2T} and Proposition~\ref{T:T2} it follows that
\[
\Theta(\phi_p) = {F}| T_2(p) = pF_p +C(p)g \equiv 0 \pmod p,
\]
from which it follows that for some integral coefficients $A_p$ we have
\[
\phi_p \equiv q^{-p}+\sum_{n=1}^\infty A_p(np)q^{np} \pmod p.
\]
 Let
\[f(z)= \frac{\eta^8(32z)}{\eta^4(16z)} = q^8+4q^{24}+O(q^{40})\in M_2(32).\]
Then $f^p  \in M_{2p}(32)$ has the form
\[f^p \equiv \sum_{n= 8}^\infty B_p(np)q^{np}\equiv q^{8p}+\cdots\pmod p.\]
Since $\phi_p\in M_0^\infty(32)$, we find that
$h_p:=\phi_p f^p\in M_{2p}(32)$ has the form
\[h_p \equiv  \sum_{n =7}^\infty D_p(pn) q^{pn}\equiv q^{7p}+\cdots\pmod p.\]
so that
\begin{equation}\label{E:hp}
h_p \equiv h_p| U(p)|V(p) \pmod p.
\end{equation}
Using \eqref{E:fil1} we obtain
\[
w_p(h_p)=p\,w_p(h_p| U(p)).
\]
Since $w_p(h_p)\equiv 2p\pmod{p-1}$ and $p\mid w_p(h_p)$ we must have
$w_p(h_p)=2p$, so that  $w_p(h_p| U(p))=2$. Thus there exists $h_0\in M_2^{(p)}(32)$ such that
\[
h_0 \equiv h_p | U(p) = q^7+O(q^8) \pmod p.
\]
However, by examining a basis for the eight-dimensional space $M_2(32)$
we find that there is no  such form $h_0$. This provides the desired contradiction.
\end{proof}

\begin{proof}[Proof of Theorem~\ref{T:main}]
Assertion \eqref{eq:vpcp}  follows  from Lemmas~\ref{L:Cp} and \ref{L:Cp2}. To prove \eqref{eq:thmmain},
we use Proposition~\ref{T:T2} and \eqref{E:TUV} to find that
\begin{equation}\label{E:FU}
\frac{{F}| U(p^{2m+1})}{C(p^{2m+1})}- g =\frac{1}{C(p^{2m+1})}\left(p^{2m+1}F_{p^{2m+1}} - \sum_{j=1}^{2m+1}p^j {F} | U(p^{2m+1-j})| V(p^j)\right).
\end{equation}
Using  \eqref{E:TUV} we obtain
\[
{F}| T_2(p^{2m})
 = \sum_{j=1}^{2m+1}p^{j-1} {F} | U(p^{2m+1-j})| V(p^{j-1}).
\]
Since $C(n)=0$ for  $n\not\equiv 3 \pmod 4$, we see from Proposition~\ref{T:T2} that
${F}| T_2(p^{2m})=p^{2m}F_{p^{2m}}$.  It follows that
\begin{equation*}
\sum_{j=1}^{2m+1}p^{j} {F} | U(p^{2m+1-j})| V(p^{j}) = p^{2m+1}F_{p^{2m}}| V(p) \equiv 0 \pmod {p^{2m+1}}.
\end{equation*}
Assertion \eqref{eq:thmmain} now follows from
 \eqref{E:FU} and  \eqref{eq:vpcp}.
\end{proof}

\section{An example in weight  $4$ and level $9$}\label{sec:GKO}
In \cite{GKO}, the authors study the $p$-adic coupling of mock modular forms and their shadows.
As an application of their general result, they prove
two $p$-adic limit formulas involving the hypergeometric functions $_2F_1\left(\frac{1}{3},\frac{1}{3};1;z\right)$ and $_2F_1\left(\frac{1}{3},\frac{2}{3};1;z\right)$ evaluated at certain modular functions.
We will use the following notation:
\begin{align*}
g_1(z)&=\eta^8(3z)= \sum_{n\ge 1}a(n)q^n=q-8 q^4+20 q^7-70 q^{13}+\cdots\in S_4(9),\\
L_1(z)&=\frac{\eta^3(z)}{\eta^3(9z)}+3=\frac{1}{q}+5 q^2-7 q^5+3 q^8+15 q^{11}+\cdots,\\
G(z) &= g_1(z)L_1^2(z)=\sum_{n\ge -1}C(n)q^n=\frac{1}{q}+2 q^2-49 q^5+48 q^8+771 q^{11}+\cdots.
\end{align*}
After rewriting using (3.3) and (3.4) of \cite{GKO}, we find that each of the two formulas in Theorem~1.3 of \cite{GKO} is equivalent to the assertion that for every prime $p\equiv 2 \pmod 3$ with $p^3\nmid C(p)$ we have
\begin{equation}\label{E:g4}
\lim_{m\rightarrow \infty} \frac{G | U(p^{2m+1})}{C(p^{2m+1})}=g_1(z).
\end{equation}
Here we prove a strengthened version of this result.
\begin{theorem}\label{T:main4}
Let $p\equiv 2 \pmod 3$ be a prime. Then for each integer $m\ge 0$ we have
\begin{align}
v_p(C(p^{2m+1}))&=\begin{cases}
    3m+1, & \text{if } p=2,\\
    3m, & \text{if } p\neq2.
  \end{cases} \label{E:vp41}\\
v_p\left(\frac{G| U(p^{2m+1})}{C(p^{2m+1})}-g_1\right)&\ge \begin{cases}
    3m+2, & \text{if } p=2,\\
    3m+3, & \text{if } p\neq2.
  \end{cases}
 \label{E:vp42}
\end{align}
\end{theorem}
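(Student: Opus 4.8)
The plan is to mirror the structure of the weight-$2$, level-$32$ argument from Section~\ref{sec:main}, adapting each step to weight $4$ and level $9$. First I would establish the analogue of Proposition~\ref{T:T2}: construct a family of forms $G_m \in M_4^\infty(9)\cap\Z[\![q]\!]$ with $G_m = q^{-m} + O(q^2)$ for appropriate $m$ (here with $m\equiv -1 \pmod 3$, since the relevant exponents come from powers of $L_1$), using explicit eta-quotients of the shape $g_1(z)L_1^r(z)$ and the standard holomorphy criteria at the cusps of $\Gamma_0(9)$; uniqueness follows because $S_4(9)$ is one-dimensional, spanned by $g_1$. Then, exactly as in the proof of Proposition~\ref{T:T2}, decompose $G|T_4(p^n)$ via \eqref{E:TUV} and match principal parts and constant-type terms to get $G|T_4(p^n) = p^{3n}G_{p^n} + C(p^n)g_1$, where the weight enters through the factor $p^{(k-1)n} = p^{3n}$.

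Next I would prove the two lemmas corresponding to Lemmas~\ref{L:Cp} and \ref{L:Cp2}. For the analogue of Lemma~\ref{L:Cp}, I would use the result of \cite{GKO} (or reprove it directly in this setting) that there is a weakly holomorphic modular function $\phi_p \in M_0^\infty(9)$ with principal part $q^{-p}$ and constant-ish term controlled by $C(p)$; applying $\Theta^{3} : M_{-2}^\infty(9) \to M_4^\infty(9)$ from Lemma~\ref{P:HD} (the $k=4$ case of \eqref{E:Th}) and comparing with $G|T_4(p)$ should yield an identity $G|T_4(p) = \Theta^3(\phi_p)$. From this, an inductive unwinding of $G|U(p^{2m+1})$ in terms of $\Theta^3(\phi_p)|U(p^{2k})$ — noting that $\Theta^3(\phi_p)|U(p^{2k}) \equiv 0 \pmod{p^{6k}}$ and, crucially, that $\Theta^3$ itself contributes a factor $p^3$ after one application of $U(p)$ (since $\Theta^3(\sum c_n q^n)|U(p) = \sum c_{pn}(pn)^3 q^n$) — gives the congruence $C(p^{2m+1}) \equiv (\pm p)^{3m} \cdot (\text{unit multiple of } C(p)) \pmod{p^{3m+?}}$, with the power of $p$ and the modulus needing to be tracked carefully. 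The $p=2$ exceptional case arises because $v_2$ of various small integers (e.g.\ from $2^3 = 8$ versus $p^3$ for odd $p$, and from the leading coefficient of $L_1$) shifts the count by one; I would isolate this by direct computation with the first several $q$-coefficients.

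For the analogue of Lemma~\ref{L:Cp2}, asserting $p^3 \nmid C(p)$ (equivalently $v_p(C(p)) \le 2$, matching the $m=0$ case of \eqref{E:vp41}), I would argue by contradiction as in Lemma~\ref{L:Cp2}: if $p^3 \mid C(p)$, then $\Theta^3(\phi_p) = G|T_4(p) \equiv 0 \pmod{p}$, so $\phi_p$ is congruent mod $p$ to a series supported on exponents divisible by $p$ (the $\Theta^3$ kills exponents not divisible by $p$ automatically, so the content of the hypothesis is about the $q$-coefficient); multiplying $\phi_p$ by a suitable $p$-th power $f^p$ with $f \in M_4(9)$ to clear the pole, I get $h_p := \phi_p f^p \in M_{4p}(9)$ with $h_p \equiv h_p|U(p)|V(p) \pmod p$, hence by \eqref{E:fil1} the filtration $w_p(h_p)$ is divisible by $p$ and congruent to $4p \pmod{p-1}$, forcing $w_p(h_p) = 4p$ and $w_p(h_p|U(p)) = 4$; a basis computation in the (low-dimensional) space $M_4(9)$ then shows no form with the required $q$-expansion exists. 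Finally, the proof of the theorem itself parallels the last proof in Section~\ref{sec:main}: from $G|T_4(p^{2m+1})$ and $G|T_4(p^{2m})$ via \eqref{E:TUV}, isolate $G|U(p^{2m+1})/C(p^{2m+1}) - g_1$, show the error term equals $p^{3(2m+1)}G_{p^{2m+1}}$ minus a multiple of $p^{6m+?}G_{p^{2m}}|V(p)$, and combine with \eqref{E:vp41} to read off the bound \eqref{E:vp42}.

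The main obstacle I expect is the careful bookkeeping of powers of $p$ through the $\Theta^3$ operator and the Hecke decomposition, especially tracking the extra factor of $p^3$ per application of $\Theta^3\circ U(p)$ and correctly pinning down where the $p=2$ anomaly enters; the eta-quotient holomorphy checks and the finite-dimensional basis computation in $M_4(9)$ are routine but must be done explicitly. A secondary subtlety is verifying cleanly that the two hypergeometric formulas of \cite[Thm.~1.3]{GKO}, after the rewriting via their (3.3)--(3.4), really do reduce to \eqref{E:g4}, so that Theorem~\ref{T:main4} genuinely strengthens their result.
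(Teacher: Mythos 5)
Your overall strategy is exactly the paper's: the same family $G_m$ and Hecke decomposition $G|T_4(p^n)=p^{3n}G_{p^n}+C(p^n)g_1$, the same identity $\Theta^3(\phi_p)=\pm G|T_4(p)$ with iteration of $U(p^2)$, and the same filtration-based non-divisibility lemma. But there is one genuine logical gap. You propose to prove only $p^3\nmid C(p)$ and assert that this ``matches the $m=0$ case of \eqref{E:vp41}.'' It does not: for odd $p$ the $m=0$ case asserts $v_p(C(p))=0$, and the congruence $C(p^{2m+1})\equiv(-1)^mp^{3m}C(p)\pmod{p^{3m+3}}$ yields the exact equality $v_p(C(p^{2m+1}))=3m$ only if $p\nmid C(p)$; knowing merely $v_p(C(p))\le 2$ leaves $v_p(C(p^{2m+1}))\in\{3m,3m+1,3m+2\}$ undetermined. (The hypothesis $p^3\nmid C(p)$ is what \cite{GKO} needed for their limit formula; the content of the sharpened theorem is precisely that one must, and can, prove $p\nmid C(p)$ for odd $p\equiv 2\pmod 3$.) Fortunately your contradiction argument only ever uses $\Theta^3(\phi_p)\equiv 0\pmod p$, which already follows from $p\mid C(p)$, so you should run it under that weaker hypothesis and conclude the full statement; the case $p=2$ is then handled by $C(2)=2$.

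A second, fixable problem is the weight bookkeeping in that contradiction argument. For $\Theta^{k-1}$ with $k=4$ you need $\phi_p\in M_{-2}^\infty(9)$, not $M_0^\infty(9)$ as you first write (you later state the correct map $M_{-2}^\infty(9)\to M_4^\infty(9)$, so this is an internal inconsistency, but it propagates): the paper builds $\phi_p$ from $\phi_2=\eta^2(3z)/\eta^6(9z)$, which has weight $-2$. Consequently your $h_p=\phi_p f^p$ with $f\in M_4(9)$ lies in $M_{4p-2}(9)$, not $M_{4p}(9)$, so $w_p(h_p)\equiv 2\pmod{p-1}$ and the filtration argument pushes you toward $M_2(9)$ rather than $M_4(9)$; moreover the valence formula on $\Gamma_0(9)$ caps the order of vanishing of $f$ at $\infty$, so the leading exponent of $h_p|U(p)$ is small and a contradiction is not automatic (e.g.\ $M_2(9)$ and $M_4(9)$ contain Eisenstein series with nonzero constant term). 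The paper sidesteps all of this by taking $h_p=\phi_p\phi_2^{-p}\in M_{2p-2}(9)$, using that $\phi_2$ is a non-vanishing eta-quotient; the filtration computation then forces $w_p(h_p)=0$, and $M_0(9)$ having only constants gives an immediate contradiction. The remaining hedged points are routine once set up correctly: the construction of $G_m$ should cover all $m$ with $3\nmid m$ (not just $m\equiv -1\pmod 3$, since the final step uses $G_{p^{2m}}$ with $p^{2m}\equiv 1\pmod 3$), and the modulus in the key congruence is $p^{3m+3}$.
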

The proof follows the argument in Section \ref{sec:main}, so we give fewer details here.
\begin{proposition}\label{P:Fm4}
We have the following.
\begin{enumerate}
\item For every integer $m\ge -1$ with $3\nmid m,$ there exists a unique $G_m\in M_4^\infty(9)\bigcap\Z[\![q]\!]$ of the form \[G_m=q^{-m}+O(q^2).\]
\item Let $p\neq 3$ be  prime  and let $n$ be a nonnegative integer. Then we have
\begin{equation*}
G| T_4(p^n) = p^{3n}G_{p^n}+C(p^n)g_1.
\end{equation*}
\end{enumerate}
\end{proposition}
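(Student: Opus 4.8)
The plan is to mirror the proof of Proposition~\ref{T:T2} with the obvious adjustments for weight $4$ and level $9$. For assertion (1), I would produce an explicit family of weakly holomorphic forms spanning the relevant space: for each integer $r\ge 0$ set $H_r(z)=g_1(z)L_1^r(z)$, which by construction lies in $M_4^\infty(9)$ (one checks the order at every cusp of $\Gamma_0(9)$ using the standard $\eta$-quotient criteria, exactly as for $E_r$ in the weight-$2$ argument), and has $q$-expansion $H_r=q^{-r+1}+O(q^{-r+2})$. Since $3\mid r$ forces a pole order $\equiv 1\pmod 3$ which we must avoid, and since multiplying by $L_1$ shifts the pole order by $1$, the forms $H_r$ with $r-1\equiv m\pmod 3$ (equivalently $r\not\equiv 1\pmod 3$) can be combined linearly to build a form with leading term $q^{-m}$ and no other principal part; here the congruence condition $3\nmid m$ is exactly what guarantees such an $r$ exists with $r\ge 1$. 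Uniqueness follows because $S_4(9)$ is one-dimensional (spanned by $g_1$), so two such $G_m$ differ by a cusp form that is $O(q^2)$, hence zero.

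For assertion (2), I would repeat the Hecke-operator computation verbatim. Starting from \eqref{E:TUV} with $k=4$ and trivial character,
\[
G\mid T_4(p^n)=G\mid U(p^n)+\sum_{j=1}^{n-1}p^{3j}\,G\mid U(p^{n-j})\mid V(p^j)+p^{3n}\,G\mid V(p^n),
\]
one observes that $G\mid U(p^n)=C(p^n)q+O(q^2)=C(p^n)g_1+O(q^2)$, while the remaining terms contribute $p^{3n}q^{-p^n}+O(q^2)$, since the only principal part comes from the $j=n-1$ term in the sum together with the final $V(p^n)$ term acting on the $q^{-1}$ of $G$, and a short bookkeeping check shows their combined principal part is $p^{3n}q^{-p^n}$ (the intermediate terms $0<j<n-1$ as well as the $U(p^{n-j})$ with $n-j\ge 2$ kill the pole). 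By Lemma~\ref{P:HD}, $G\mid T_4(p^n)\in M_4^\infty(9)$ since $(p,9)=1$, so it is a form in that space with principal part $p^{3n}q^{-p^n}$; by the uniqueness in assertion (1) it must equal $p^{3n}G_{p^n}$ plus a cusp form, and matching the coefficient of $q$ identifies that cusp form as $C(p^n)g_1$. This gives $G\mid T_4(p^n)=p^{3n}G_{p^n}+C(p^n)g_1$.

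The only genuine obstacle I anticipate is the cusp analysis in assertion (1): verifying that the $\eta$-quotients $H_r=g_1 L_1^r$ (written out explicitly as an $\eta$-quotient in $\eta(z),\eta(3z),\eta(9z)$) are holomorphic at every cusp of $\Gamma_0(9)$ except $\infty$, with pole order there exactly $r-1$. The cusps of $\Gamma_0(9)$ are $\infty$, $0$, and $1/3$ (and $2/3$, equivalent to $1/3$), so this is a finite but slightly delicate check using the order-at-cusps formula for $\eta$-quotients; one also needs that $L_1$ itself is holomorphic away from $\infty$ with a simple pole there, which is why the constant $3$ was added in its definition (to clear the constant term obstruction, exactly as noted implicitly in \cite{GKO}). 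Everything else is a formal repetition of Section~\ref{sec:main}, so I would present assertion (2) in full and compress the cusp verification in assertion (1) to a citation of the standard criteria plus the statement of the resulting orders.
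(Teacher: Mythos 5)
Your argument is correct and follows the paper's proof essentially verbatim: the same auxiliary forms $g_1L_1^r$ (the paper's $E_r$), the same linear-combination construction over $r$ with $r-1\equiv m\pmod 3$, uniqueness via the one-dimensionality of $S_4(9)$, and the same $T$/$U$/$V$ bookkeeping plus Lemma~\ref{P:HD} for assertion (2). Two incidental slips in your side remarks do not affect anything: if $3\mid r$ the pole order $r-1$ is $\equiv 2\pmod 3$, not $1$ (the case to exclude is $r\equiv 1\pmod 3$, which gives pole order divisible by $3$), and $\Gamma_0(9)$ has four cusps, with $1/3$ and $2/3$ inequivalent.
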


\begin{proof}
For each integer $r\ge 0,$ let
\[E_r(z) = g_1(z)L_1(z)^r = q^{1-r}+(5r-8)q^{4-r}+O(q^{7-r}).\]
Then $E_r(z)\in M_4^\infty(9)$. We construct each form $G_m$ by taking a linear combination of $E_r$ with $r-1\equiv m \pmod 3.$ Uniqueness
 follows since $S_4(9)$ is spanned by the form $g_1=G_{-1}$.

We  deduce assertion (2) as in the last section using \eqref{E:T}, \eqref{E:TUV}, and assertion (1).

\end{proof}
\begin{lemma}\label{L:Cp4}
If $p\equiv 2 \pmod 3$ is prime, then
\begin{equation*}
C(p^{2m+1}) \equiv (-1)^{m}p^{3m}C(p) \pmod {p^{3m+3}}.
\end{equation*}
\end{lemma}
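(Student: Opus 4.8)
The plan is to mimic the proof of Lemma~\ref{L:Cp}, replacing the weight-$2$ level-$32$ setup with the weight-$4$ level-$9$ setup. First I would produce, in analogy with the modular function $\phi_p$ of \cite{EO}, a suitable form in $M_2^\infty(9)$ to which $\Theta^3$ can be applied. Concretely, using Lemma~\ref{P:HD} (specifically \eqref{E:Th} with $k=4$), I expect there is a weakly holomorphic form $\psi_p\in M_{-2}^\infty(9)$ of the shape $\psi_p = q^{-p}+\cdots$ with controlled lower-order terms, so that $\Theta^3(\psi_p)\in M_4^\infty(9)$ has leading term $-p^3 q^{-p}$ together with the expected contribution $C(p)q+O(q^2)$. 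The key identity to establish is then
\begin{equation*}
G| T_4(p) = \Theta^3(\psi_p),
\end{equation*}
which follows by comparing $q$-expansions: by Proposition~\ref{P:Fm4}(2) we have $G|T_4(p)=p^3 G_p + C(p)g_1$, and since both sides lie in $M_4^\infty(9)$ and agree in the relevant initial coefficients (leading term $-p^3 q^{-p}$, plus $C(p)q + O(q^2)$), uniqueness of $G_p$ (equivalently one-dimensionality of $S_4(9)$) forces equality.

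Next I would rewrite this as $G|U(p) = \Theta^3(\psi_p) - p^3\, G|V(p)$, following \eqref{E:Up1}, and iterate: applying $U(p^2)$ repeatedly and using $\Theta^3(\psi_p)|U(p^{2k})\equiv 0\pmod{p^{3\cdot 2k}}$ gives, for each $m\ge 0$,
\begin{equation*}
G| U(p^{2m+1}) = \sum_{k=0}^m (-1)^{m-k} p^{3(m-k)} \Theta^3(\psi_p)| U(p^{2k}) + (-1)^{m+1}p^{3(m+1)} G| V(p),
\end{equation*}
exactly paralleling \eqref{E:Up2}. Reducing modulo $p^{3m+3}$ kills every term with $k<m$ as well as the $G|V(p)$ term (noting $V(p)$ shifts the support to exponents $\ge 1$, so it does not interfere with the $q$-coefficient), leaving
\begin{equation*}
G| U(p^{2m+1}) \equiv (-1)^m p^{3m}\,\Theta^3(\psi_p) \pmod{p^{3m+3}}.
\end{equation*}
Comparing coefficients of $q$ on both sides — the coefficient of $q$ in $\Theta^3(\psi_p)$ being $C(p)$ by construction, and in $G|U(p^{2m+1})$ being $C(p^{2m+1})$ by definition — yields the claimed congruence $C(p^{2m+1})\equiv (-1)^m p^{3m}C(p)\pmod{p^{3m+3}}$.

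The main obstacle I anticipate is the first step: constructing $\psi_p\in M_{-2}^\infty(9)$ with the precise $q$-expansion needed so that $\Theta^3(\psi_p)$ has the right shape, and verifying that no unwanted intermediate coefficients appear. In the weight-$2$ case \cite{EO} supplied $\phi_p$ directly; here I would either invoke the analogous lemma from \cite{GKO} (the source of \eqref{E:g4}) or build $\psi_p$ by hand from $\eta$-quotients and the basis of $M_{-2}^\infty(9)$, then check holomorphy at the cusps other than $\infty$ using the standard $\eta$-quotient order criteria as in the proof of Proposition~\ref{P:Fm4}. Once $\psi_p$ and the identity $G|T_4(p)=\Theta^3(\psi_p)$ are in hand, the remainder is a formal iteration identical in structure to Lemma~\ref{L:Cp}, and the $p=2$ versus $p\neq 2$ dichotomy in Theorem~\ref{T:main4} is handled not here but in the subsequent analogue of Lemma~\ref{L:Cp2}, where the value $v_p(C(p))$ is pinned down by inspecting a finite-dimensional space of holomorphic forms.
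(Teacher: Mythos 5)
Your strategy is the paper's: build an auxiliary form in $M_{-2}^\infty(9)$ whose principal part is the single term $q^{-p}$, apply $\Theta^3$ via Lemma~\ref{P:HD}, identify the image with $\pm G|T_4(p)$ using Proposition~\ref{P:Fm4}, iterate the resulting recursion for $U(p)$, and read off the coefficient of $q$. Two things need attention. The first is a sign that makes your key identity false as stated: $G|T_4(p)=p^3G_p+C(p)g_1$ has principal part $+p^3q^{-p}$, whereas $\Theta^3$ applied to $q^{-p}+\cdots$ has principal part $-p^3q^{-p}$, so $G|T_4(p)=\Theta^3(\psi_p)$ cannot hold and the ``agreement of initial coefficients'' you invoke already fails at the pole. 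The paper's identity \eqref{E:ph4} reads $G|T_4(p)=-\Theta^3(\phi_p)$ with $\phi_p=q^{-p}-C(p)q+O(q^2)$. Since your subsequent signs are internally consistent with your normalization, the iteration and the final congruence survive once this is repaired (also, in the reduction modulo $p^{3m+3}$ it is the terms with $k\ge 1$ that are killed and the $k=0$ term that survives, not the terms with $k<m$).

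The second point is the substantive one. Knowing only that $\psi_p\in M_{-2}^\infty(9)$ has principal part $q^{-p}$ determines $\Theta^3(\psi_p)$ only up to adding a multiple of $g_1$, so the coefficient of $q$ --- the one coefficient the lemma is about --- is precisely the quantity you cannot simply declare to be ``the expected contribution $C(p)q$.'' The paper pins it down by constructing $\phi_p$ explicitly from $\phi_2=\eta^2(3z)/\eta^6(9z)$ and powers of $L_1$, so that its coefficients are supported on exponents $\equiv 1\pmod 3$, and then extracting $A_p(1)=-C(p)$ from the vanishing of the constant term of a weight-two form built from $\phi_p$ and $G$ (a residue argument). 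Your fallback of importing the forms $w_l$ of \cite[Prop. 3.1]{GKO} would indeed supply this datum --- the paper itself notes its $\phi_l$ coincide with those forms --- so the gap is one of detail rather than of strategy; but as written your argument does not establish the coefficient identity on which the whole congruence rests.
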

\begin{proof}
Define
 \[
 \phi_2(z)= \frac{\eta^2(3z)}{\eta^6(9z)} = \sum_{n\ge -2}A_2(n)q^n=\frac{1}{q^2}-2 q-q^4+O\left(q^5\right).
 \]
It is seen from the  expression of $\phi_2$ as an infinite product that $A_2(n)=0$ if $n\not\equiv 1 \pmod 3$.
Similarly, if
\[
L_1(z) = \frac{\eta^3(z)}{\eta^3(9z)}+3 = \sum_{n\ge -1}b(n)q^n,
\]
then $b(n)=0$ for all $n\not\equiv 2 \pmod 3.$ Therefore, for each positive integer $l\equiv 2 \pmod 3$ there exist $c_0,c_1,\ldots, c_{\frac{l-2}{3}}\in\mathbb{Z}$ such that
\begin{equation*}
\phi_l(z) = \phi_2(z)\sum_{j=0}^{\frac{l-2}{3}}c_j L_1^{l-2-3j}(z) = q^{-l}+\sum_{n\ge 1} A_l(n)q^n \in M_{-2}^\infty(9),
\end{equation*}
with $A_l(n)\in\mathbb{Z}$ and $A_l(n)=0$ if $n\not\equiv 1 \pmod 3$
(these coincide with the forms $w_l$ in \cite[Prop. 3.1]{GKO}).
Since the constant term in the weight two modular form $\phi_l L_1$ must be zero, we find as in the last section that
$A_l(1)=-C(l)$. In particular, for any prime $p\equiv 2 \pmod 3$ we have
\[\phi_p = q^{-p}-C(p)q +O(q^2).\]
By Lemma \ref{P:HD}, we have \[\Theta^3(\phi_p)= -p^3q^{-p}-C(p)q+O(q^2)\in M_4^\infty(9).\] Hence it follows from Proposition \ref{P:Fm4} that
\begin{equation}\label{E:ph4}
\Theta^3(\phi_p) = -p^3 G_p -C(p)g_1 = -G| T_4(p) =-G| U(p) -p^3 G| V(p),
\end{equation}
so that
\begin{equation}\label{E:Up4}
G| U(p) = -\Theta^3(\phi_p)-p^3 G| V(p).
\end{equation}
Applying $U(p^2)$ iteratively  leads to
\begin{equation}\label{E:Up4m}
G| U(p^{2m+1}) = \sum_{l=0}^m (-1)^{m+1-l}p^{3(m-l)}\Theta^3(\phi_p)| U(p^{2l})+(-1)^{m+1}p^{3(m+1)}G| V(p)
\end{equation}
for any non-negative integer $m$.
Since $\Theta^3(\phi_p)| U(p^{2l}) \equiv 0 \pmod {p^{6l}}$, we have from \eqref{E:Up4m} that
\begin{equation}\label{E:Up4mc}
G| U(p^{2m+1}) \equiv (-1)^{m+1}p^{3m}\Theta^3(\phi_p) \pmod {p^{3m+3}}.
\end{equation}
Comparing  coefficients of $q$ in \eqref{E:Up4mc} gives the result.
\end{proof}

The authors of \cite{GKO}  verified that $p^3\nmid C(p)$ for every prime $p\equiv 2 \pmod 3$ less than $32,500$.
Here we prove
\begin{lemma}\label{L:Cp4d}
For every odd prime $p\equiv 2 \pmod 3$, we have $p\nmid C(p)$.
\end{lemma}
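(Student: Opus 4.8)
The plan is to run the argument of Lemma~\ref{L:Cp2} in weight $4$ and level $9$; here the bookkeeping is in fact cleaner, since the filtration estimate by itself yields the contradiction and no inspection of an explicit basis is needed. Fix an odd prime $p\equiv 2\pmod 3$ (so $p\ge 5$) and suppose for contradiction that $p\mid C(p)$. Then \eqref{E:ph4} gives $\Theta^3(\phi_p)=-p^3G_p-C(p)g_1\equiv 0\pmod p$. Writing $\phi_p=q^{-p}+\sum_{n\ge 1}A_p(n)q^n$ with $A_p(n)\in\Z$, the congruence $\Theta^3(\phi_p)\equiv 0\pmod p$ forces $n^3A_p(n)\equiv 0\pmod p$ for every $n$, hence $A_p(n)\equiv 0\pmod p$ whenever $p\nmid n$. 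Thus $\phi_p$ reduces modulo $p$ to a $q$-series supported on exponents divisible by $p$.

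Next I would clear the pole at $\infty$. Let $f_*\in M_2(\Gamma_0(9))$ be a weight-$2$ holomorphic form with integral $q$-expansion vanishing at $\infty$ to order $2$, normalized so that $f_*=q^2+O(q^3)$; one may take the eta-quotient $f_*=\eta^6(9z)/\eta^2(3z)$, whose holomorphy at the remaining cusps of $\Gamma_0(9)$ is verified using the criteria of \cite[Thm. 1.64, Thm. 1.65]{O:web}. Since $f_*^p\equiv f_*\,|\,V(p)\pmod p$, the form
\[
h_p:=\phi_p\,f_*^p\in M_{2p-2}^{(p)}(9)
\]
has $q$-expansion $q^p+O(q^{p+1})$, is not $\equiv 0\pmod p$, and reduces modulo $p$ to a $q$-series supported on exponents divisible by $p$. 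Consequently $h_p\equiv (h_p\,|\,U(p))\,|\,V(p)\pmod p$.

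Finally I would invoke the filtration. By \eqref{E:TUV} we have $h_p\,|\,T_{2p-2}(p)=h_p\,|\,U(p)+p^{2p-3}h_p\,|\,V(p)$, so $h_p\,|\,U(p)$ is congruent modulo $p$ to the form $h_p\,|\,T_{2p-2}(p)\in M_{2p-2}^{(p)}(9)$. Hence $w_p(h_p\,|\,U(p))$ is defined, is congruent to $2p-2\equiv 0\pmod{p-1}$, and is positive because $h_p\,|\,U(p)=q+O(q^2)$ is not congruent to a constant; therefore $w_p(h_p\,|\,U(p))\ge p-1$. Combining $h_p\equiv (h_p\,|\,U(p))\,|\,V(p)\pmod p$ with \eqref{E:fil1} then gives $w_p(h_p)=p\,w_p(h_p\,|\,U(p))\ge p(p-1)$, which contradicts $w_p(h_p)\le 2p-2<p(p-1)$. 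Hence $p\nmid C(p)$.

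The one step that needs genuine (if routine) care is the construction of $f_*$: one must exhibit a weight-$2$, level-$9$ holomorphic form with $p$-integral coefficients vanishing at $\infty$ to order at least $2$, and verify its holomorphy at the three non-infinite cusps of $\Gamma_0(9)$. The order-at-least-$2$ requirement is exactly what guarantees that $h_p\,|\,U(p)$ is not congruent to a constant, and hence that $w_p(h_p\,|\,U(p))$ is positive; with a multiplier vanishing only to order $1$ one would merely recover $h_p\equiv 1+\cdots$ and could not immediately conclude. Everything else is formal manipulation with $U$, $V$, the relation \eqref{E:TUV}, and the identity \eqref{E:fil1}, exactly as in Section~\ref{sec:main}.
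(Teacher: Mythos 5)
Your proof is correct and follows essentially the same route as the paper: your multiplier $f_*=\eta^6(9z)/\eta^2(3z)$ is exactly $\phi_2^{-1}$, so your $h_p=\phi_p f_*^p$ is literally the paper's $h_p=\phi_p\phi_2^{-p}$, and both arguments conclude with the same filtration computation (the paper phrases it as forcing $w_p(h_p)=0$ and contradicting non-constancy in $M_0(9)$, while you phrase it as $w_p(h_p)\ge p(p-1)>2p-2$; these are the same divisibility facts). No substantive differences.
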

\begin{proof}
Suppose by way of contradiction that $p\equiv 2\pmod 3$ is an odd prime with $p\mid C(p).$ Then  \eqref{E:ph4} gives
\[\Theta^3(\phi_p) \equiv 0 \pmod p,\]
which implies that for some coefficients $A_p$ we have
\[
\phi_p\equiv q^{-p}+\sum_{n\ge 1}A_p(np)q^{np} \pmod p.
\]
Since $\phi_2$ has no zeros on the upper half plane (and does not vanish at any cusp), we have
$h_p:=\phi_p \phi_2^{-p}  \in  M_{2p-2}(9)$.
Moreover,
\[h_p\equiv  \sum_{n\ge p}D_p(pn)q^{pn}\equiv q^p+\cdots\pmod p.\]
Therefore
$h_p| U(p)|V(p) \equiv h_p \pmod p$
so that
$w_p(h_p)=p w_p(h_p| U(p))$.
Since $w_p(h_p)\equiv 2p-2\pmod {p-1}$ and $w_p(h_p)\equiv 0\pmod p$, we must have $w_p(h_p)=0$,
but this is impossible since $M_0(9)$ contains no non-constant elements.
\end{proof}

\begin{proof}[Proof of Theorem \ref{T:main4}]
Assertion \eqref{E:vp41} follows from Lemma~\ref{L:Cp4}, Lemma~\ref{L:Cp4d}, and the fact that $C(2)=2$.
Next, we use Proposition \ref{P:Fm4} and \eqref{E:TUV} to write
\begin{equation}\label{E:FU4}
\frac{G| U(p^{2m+1})}{C(p^{2m+1})}- g_1 =\frac{1}{C(p^{2m+1})}\left(p^{6m+3}G_{p^{2m+1}} - \sum_{j=1}^{2m+1}p^{3j} G | U(p^{2m+1-j})| V(p^j)\right).
\end{equation}
Since $C(n)=0$ for any $n\not\equiv 2 \pmod 3$, Proposition~\ref{P:Fm4} and  \eqref{E:TUV} give
\[\sum_{j=1}^{2m+1}p^{3j} G | U(p^{2m+1-j})| V(p^{j}) = p^3G| T_4(p^{2m})| V(p) = p^{6m+3}G_{p^{2m}}| V(p) \equiv 0 \pmod {p^{6m+3}}.\]
The result follows from  \eqref{E:FU4} and \eqref{E:vp41}.
\end{proof}

\section{An example in weight  $3$ and level $16$}\label{sec:BG}
In \cite{BG}  the authors establish an analogous representation of a weight $3$ cusp form
as a $p$-adic limit.  Let $\chi$ denote the non-trivial Dirichlet character modulo $4$, and define
\begin{align*}
g_2(z)&:= \eta^6(4z) = \sum_{n\ge 1}a(n)q^n=q-6 q^5+9 q^9+\cdots \in S_3(16,\chi),\\
L_2(z) &:= \frac{\eta^6(8z)}{\eta^2(4z)\eta^4(16z)}=\frac{1}{q}+2 q^3-q^7-2 q^{11}+\cdots,\\
H(z) &:= g_2(z)L_2^2(z)=\sum_{n\ge -1}C(n)q^n=\frac{1}{q}-2 q^3-13 q^7+26 q^{11}+\cdots.
\end{align*}
The two  formulas stated in the main theorem of \cite{BG} involve the hypergeometric function $_2F_1(\frac{1}{2},\frac{1}{2};1;z);$
after rewriting they are equivalent to the following statement: for every prime $p\equiv 3 \pmod 4$ with $p^2\nmid C(p)$ we have
\[\lim_{m\rightarrow \infty} \frac{H| U(p^{2m+1})}{C(p^{2m+1})}=g_2(z).\]
Here we prove
\begin{theorem}\label{T:main3}
For every prime $p\equiv 3 \pmod 4$ and every integer $m\ge 0$ we have
\begin{align}
v_p(C(p^{2m+1}))&=2m, \label{E:vp31}\\
v_p\left(\frac{H| U(p^{2m+1})}{C(p^{2m+1})}-g_2\right)&\ge 2m+2.\label{E:vp32}
\end{align}
\end{theorem}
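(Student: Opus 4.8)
The plan is to mimic Sections \ref{sec:main} and \ref{sec:GKO} verbatim, adapting the bookkeeping of $p$-powers to weight $3$ and level $16$. First I would establish the weight-$3$ analogue of Propositions \ref{T:T2} and \ref{P:Fm4}: using the building blocks $E_r:=g_2 L_2^r\in M_3^\infty(16,\chi)\cap\Z[\![q]\!]$ (which have $q$-expansion $q^{1-2r}+\cdots$, so the exponents skip by $4$ and one can solve triangularly for any head $q^{-m}$ with $m\equiv 1\pmod 4$), one produces unique $H_m\in M_3^\infty(16,\chi)\cap\Z[\![q]\!]$ with $H_m=q^{-m}+O(q^3)$ for odd $m\ge-1$, with $H_{-1}=g_2$ since $\dim S_3(16,\chi)=1$; uniqueness uses that $S_3(16,\chi)$ is one-dimensional. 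The Hecke side then reads, for a prime $p\ne 2$ and $n\ge 0$,
\[
H\,|\,T_{3,\chi}(p^n)=p^{2n}\,H_{p^n}+C(p^n)\,g_2,
\]
which follows from \eqref{E:TUV}, Lemma \ref{P:HD} (note $\Theta^{2}\colon M_{-1}^\infty(16,\chi)\to M_3^\infty(16,\chi)$, using $k=3$), and matching principal parts and the coefficient of $q$, exactly as before. The factor $p^{2n}$ (rather than $p^n$ or $p^{3n}$) comes from the weight: $\Theta^{k-1}$ introduces $n(k-1)=2n$ factors of $p$ on the term $q^{-p^n}$.

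Next I would prove the congruence lemma, the analogue of Lemmas \ref{L:Cp} and \ref{L:Cp4}: for $p\equiv 3\pmod 4$,
\[
C(p^{2m+1})\equiv(-1)^m p^{2m}C(p)\pmod{p^{2m+2}}.
\]
For this I need a weakly holomorphic form $\psi_p\in M_{-1}^\infty(16,\chi)$ of the form $\psi_p=q^{-p}+\text{(stuff supported on }n\equiv 1\bmod 4)$; this should come from multiplying a fixed weight-$(-1)$ weakly holomorphic seed (an eta-quotient in $M_{-1}^\infty(16,\chi)$ whose expansion is supported on residues $\equiv 1\pmod 4$, analogous to $\phi_2$ in Section \ref{sec:GKO}) by suitable polynomials in $L_2$, and the constant term of $\psi_p L_2\in M_3^\infty(16,\chi)$ being forced to vanish identifies the coefficient of $q$ as $-C(p)$ (up to sign). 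Then $\Theta^2(\psi_p)=-p^2 q^{-p}-C(p)q+O(q^3)\in M_3^\infty(16,\chi)$, and comparison with $H\,|\,T_{3,\chi}(p)=-p^2 H_p+C(p)g_2$ (note the sign on the head of $H$ is $+1/q$, so one tracks signs carefully) yields $H\,|\,T_{3,\chi}(p)=-\Theta^2(\psi_p)$, hence $H\,|\,U(p)=-\Theta^2(\psi_p)-p^2\chi(p)\,H\,|\,V(p)$; since $\chi(p)=-1$ for $p\equiv 3\pmod 4$ this is $H\,|\,U(p)=-\Theta^2(\psi_p)+p^2 H\,|\,V(p)$. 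Iterating $U(p^2)$ and using $\Theta^2(\psi_p)\,|\,U(p^{2\ell})\equiv 0\pmod{p^{4\ell}}$ gives $H\,|\,U(p^{2m+1})\equiv(-1)^{m}p^{2m}\Theta^2(\psi_p)\cdot(\pm1)\pmod{p^{2m+2}}$, and reading off the coefficient of $q$ proves the lemma.

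I would then prove $p\nmid C(p)$ for $p\equiv 3\pmod 4$, the analogue of Lemmas \ref{L:Cp2} and \ref{L:Cp4d}, by the same filtration argument: if $p\mid C(p)$ then $\Theta^2(\psi_p)\equiv 0\pmod p$, so $\psi_p\equiv q^{-p}+(\text{terms }q^{np})\pmod p$; multiplying by the $p$-th power of a suitable holomorphic eta-quotient in $M_3(16,\chi)$ (raised so that the character becomes $\chi^p=\chi$ and the form is $\equiv q^{cp}+\cdots$) yields $h_p\in M_{3p-1+\cdots}(16,\chi)$ — more carefully, one arranges $h_p\in M_{k_p}(16,\chi)$ with $h_p\equiv h_p\,|\,U(p)\,|\,V(p)\pmod p$, whence \eqref{E:fil1} forces $w_p(h_p)=p\,w_p(h_p\,|\,U(p))$; the congruences $w_p(h_p)\equiv k_p\pmod{p-1}$ and $p\mid w_p(h_p)$ pin down $w_p(h_p)$ to a small value, and then $w_p(h_p\,|\,U(p))$ is a fixed small weight, contradicted by inspecting a basis of the relevant finite-dimensional space $M_{k}(16,\chi)$. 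One subtlety here, absent in the trivial-character cases, is keeping the nebentypus $\chi$ consistent under $U(p)$, $V(p)$, and the filtration theory; since $p\equiv 3\pmod 4$ one has $\chi^p=\chi$, so $p$-th powers of forms of character $\chi$ again have character $\chi$, and the mod-$p$ filtration theory applies over $\Z[i]$ localized at a prime above $p$ (or, since $p$ is inert or split but $\chi$ is quadratic, one can work with the $\Z$-submodule generated by Hecke eigenforms and argue integrally). Finally, Theorem \ref{T:main3} follows as before: \eqref{E:vp31} is the combination of the congruence lemma with $p\nmid C(p)$; and for \eqref{E:vp32} one writes, using Proposition's part (2) and \eqref{E:TUV},
\[
\frac{H\,|\,U(p^{2m+1})}{C(p^{2m+1})}-g_2=\frac{1}{C(p^{2m+1})}\Bigl(p^{4m+2}H_{p^{2m+1}}-\sum_{j=1}^{2m+1}p^{2j}\chi(p^j)\,H\,|\,U(p^{2m+1-j})\,|\,V(p^j)\Bigr),
\]
observes that $C(n)=0$ unless $n\equiv 3\pmod 4$ forces $H\,|\,T_{3,\chi}(p^{2m})=p^{4m}H_{p^{2m}}$, hence the sum equals $p^{4m+2}\chi(p)\,H_{p^{2m}}\,|\,V(p)\equiv 0\pmod{p^{4m+2}}$, and combines this with $v_p(C(p^{2m+1}))=2m$ from \eqref{E:vp31}. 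The main obstacle I anticipate is not any single step but rather the correct construction of the weight-$(-1)$ seed form $\psi_p$ with the right support and principal part — i.e., finding the explicit eta-quotient in $M_{-1}^\infty(16,\chi)$ analogous to $\phi_2\in M_{-2}^\infty(9)$ — together with the sign bookkeeping induced by $\chi(p)=-1$ and by the $+1/q$ (versus $-1/q$) normalization of $H$; getting these consistent is where a careless argument would go wrong.
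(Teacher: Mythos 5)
Your proposal follows the paper's argument essentially step for step: the same family $H_m$ built from $g_2L_2^r$, the same seed form in weight $-1$ (the paper takes $\phi_2=\eta^2(8z)/\eta^4(16z)$ and $\phi_p=\phi_2P_p(L_2)$, exactly the construction you describe), the same $\Theta^2$/iterated-$U(p^2)$ congruence, the same filtration argument for $p\nmid C(p)$ (the paper uses $f=\eta^{12}(16z)/\eta^6(8z)$, lands $h_p=\phi_pf^p$ in $M_{3p-1}(16)$ with \emph{trivial} character since $\chi^{p+1}=1$, and pins down $w_p(h_p)=2p$, $w_p(h_p|U(p))=2$), and the same final assembly. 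One correction: your intermediate congruence should read $C(p^{2m+1})\equiv p^{2m}C(p)\pmod{p^{2m+2}}$ with no $(-1)^m$. Precisely because $\chi(p)=-1$, the recursion is $H|U(p)=-\Theta^2(\phi_p)+p^2H|V(p)$ with a \emph{plus} sign on the $V(p)$ term, so the iteration does not alternate in sign (unlike the weight $2$ and weight $4$ cases); you hedged this with a $(\pm 1)$, and in any case only the valuation survives into \eqref{E:vp31}, so the theorem is unaffected. Likewise your statement of the Hecke identity should carry the factor $\chi(p^n)$ on $p^{2n}H_{p^n}$, as you implicitly use later. The character bookkeeping you worry about is harmless: all forms involved have rational integral $q$-coefficients, so the standard filtration theory over $\Z_{(p)}$ applies directly.
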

We give only a sketch of the proof.
\begin{proposition}\label{L:Fm3} We have the following.
\begin{enumerate}
\item For every odd integer $m\ge -1,$ there exists a unique $H_m\in M_3^\infty(16,\chi)\bigcap\Z[\![q]\!]$ of the form
\[H_m = q^{-m}+O(q^3).\]
\item Let $p$ be an odd prime and let $n$ be a nonnegative integer. Then we have
\[ H | T_{3,\chi}(p^n) = \chi(p^n)p^{2n} H_{p^n}+C(p^n)g_2.\]
\end{enumerate}
\end{proposition}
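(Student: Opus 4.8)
The plan is to mirror, step by step, the argument used for Proposition~\ref{T:T2} and Proposition~\ref{P:Fm4}, making the necessary bookkeeping adjustments for the weight~$3$, level~$16$, nontrivial character~$\chi$ setting. For assertion~(1), I would first produce a family of ``building block'' forms: for each integer $r\ge 0$ set
\[
E_r(z):=g_2(z)L_2^r(z)=q^{1-r}+O(q^{4-r}).
\]
Using the standard eta-quotient criteria (as in \cite[Thm.~1.64, Thm.~1.65]{O:web}) one checks that each $E_r$ lies in $M_3^\infty(16,\chi)$: the order at every cusp other than $\infty$ is non-negative, the weight is $3$, and the character is $\chi$ because $g_2=\eta^6(4z)\in S_3(16,\chi)$ while $L_2$ has trivial character and integral weight~$0$. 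Because $L_2$ has a $q$-expansion supported on exponents $\equiv 3\pmod 4$ (visible from its eta-product form, just as for $L$ in the level-$32$ case), the leading exponents $1-r$ of the $E_r$ run over all residues mod $4$, so a suitable $\Z$-linear combination of the $E_r$ with $1-r\equiv -m\pmod 4$ produces a form $H_m=q^{-m}+O(q^3)$. Uniqueness is immediate because the difference of two such forms would be a cusp form in $S_3(16,\chi)$ vanishing to order $\ge 3$ at $\infty$, and $S_3(16,\chi)$ is one-dimensional, spanned by $g_2=H_{-1}$ (with the sign convention $H_{-1}=g_2$).

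For assertion~(2) I would expand $H\,|\,T_{3,\chi}(p^n)$ using the identity~\eqref{E:TUV}:
\[
H\,|\,T_{3,\chi}(p^n)=\sum_{j=0}^{n}\chi(p^j)p^{2j}\,H\,|\,U(p^{n-j})\,|\,V(p^j).
\]
The $j=0$ term is $H\,|\,U(p^n)=C(p^n)q+O(q^3)=C(p^n)g_2+O(q^3)$, using that $H\,|\,U(p^n)$ is holomorphic (its only potentially negative exponent would be $-1/p^n$, which is not an integer). The remaining terms $\sum_{j\ge 1}\chi(p^j)p^{2j}H|U(p^{n-j})|V(p^j)$ have principal part exactly $\chi(p^n)p^{2n}q^{-p^n}$ coming from the $j=n$ term, and are $O(q^3)$ otherwise. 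Hence $H\,|\,T_{3,\chi}(p^n)-\chi(p^n)p^{2n}H_{p^n}-C(p^n)g_2$ is an element of $M_3^\infty(16,\chi)$ (here I use Lemma~\ref{P:HD}, which gives $T_{3,\chi}(p^n)\colon M_3^\infty(16,\chi)\to M_3^\infty(16,\chi)$ since $p\neq 2$ so $(p,16)=1$, together with part~(1) to identify $H_{p^n}$) that is $O(q^3)$; being a holomorphic cusp form vanishing to order $\ge 3$ at $\infty$ it must be $0$, which gives the stated formula.

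The one genuinely delicate point, and the place I would be most careful, is the handling of the character: one must track the factor $\chi(p^j)$ through \eqref{E:TUV} and confirm that the target space $M_3^\infty(16,\chi)$ (rather than some space with a twisted character) is preserved, and that the eta-quotient $g_2L_2^r$ really carries character $\chi$ and not the trivial one — the parity here is exactly what forces $p\equiv 3\pmod 4$ into the hypotheses of Theorem~\ref{T:main3} via $\chi(p)=-1$. Everything else is a routine transcription of Sections~\ref{sec:main} and~\ref{sec:GKO}, with the weight-$2$ operator $\Theta^{k-1}=\Theta^2$ and the exponent $p^{(k-1)}=p^2$ replacing their earlier analogues; in particular the subsequent congruence lemma and the filtration argument ruling out $p\mid C(p)$ would proceed by applying $\Theta^2$ to an appropriate weight-$(-1)$ (i.e. $M_{2-k}^\infty=M_{-1}^\infty$) eta-quotient $\phi_p$ and examining a basis of $M_3(16,\chi)$.
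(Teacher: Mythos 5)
Your proposal matches the paper's proof essentially verbatim: the paper likewise builds $H_m$ from the eta-quotients $E_r=g_2L_2^r\in M_3^\infty(16,\chi)$, invokes the one-dimensionality of $S_3(16,\chi)$ for uniqueness, and proves assertion (2) by the same expansion of $H|T_{3,\chi}(p^n)$ via \eqref{E:TUV} together with Lemma~\ref{P:HD} and part (1). Your version simply supplies the details that the paper compresses into ``as before,'' including the correct tracking of the factors $\chi(p^j)p^{2j}$ and of the principal part $\chi(p^n)p^{2n}q^{-p^n}$.
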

\begin{proof}
For each integer $r\ge 0$ define
\[
E_r(z) := g_2(z)L_2^r(z)= \frac{\eta^{6r}(8z)}{\eta^{2r-6}(4z)\eta^{4r}(16z)}\in M_3^\infty(16,\chi).\]
We  construct the form $H_m$ with the desired properties by taking an appropriate linear combination of $E_r$, and uniqueness follows since $S_3(16,\chi)$ is one-dimensional.
Assertion (2) is proved as before.
\end{proof}

\begin{lemma}\label{L:Cp3}
If $p\equiv 3 \pmod 4$ is prime and $m\geq 0$  then
\[C(p^{2m+1}) \equiv p^{2m}C(p) \pmod {p^{2m+2}}.\]
\end{lemma}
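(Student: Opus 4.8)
The plan is to follow the template of the proofs of Lemmas~\ref{L:Cp} and \ref{L:Cp4}, with $\Theta$ replaced by $\Theta^{k-1}=\Theta^{2}$ and with careful attention to the character. The first step is to produce the weakly holomorphic companion of weight $2-k=-1$: for each prime $p\equiv 3\pmod 4$ I would construct a form
\[
\phi_p = q^{-p}+\sum_{n\ge 1}A_p(n)q^{n}\in M_{-1}^{\infty}(16,\chi)\cap\Z[\![q]\!],
\]
exactly as the $H_m$ are built from the $E_r$ in Proposition~\ref{L:Fm3}: start from a fixed weight $-1$ eta quotient on $\Gamma_0(16)$ with character $\chi$ having a pole of order $3$ at $\infty$ and holomorphic at the remaining cusps, and multiply it by a suitable $\Z$-linear combination of powers of $L_2^{4}$; one checks that $\phi_p$ is supported on exponents $n\equiv 1\pmod 4$, so in particular has no constant term. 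The one feature of $\phi_p$ that is actually needed is $A_p(1)=-C(p)$, which follows exactly as in Section~\ref{sec:GKO}: since $\phi_p H$ is a weight $2$ form holomorphic away from $\infty$, the coefficient of $q^{0}$ in
\[
\phi_p H=\bigl(q^{-p}+A_p(1)q+\cdots\bigr)\Bigl(q^{-1}+\sum_{n}C(n)q^{n}\Bigr),
\]
namely $C(p)+A_p(1)$, must vanish (here $C(-1)=1$ and $C(n)=0$ for $0\le n\le 2$).

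By \eqref{E:Th} we have $\Theta^{2}(\phi_p)\in M_3^{\infty}(16,\chi)$, and since $\Theta^{2}$ multiplies the coefficient of $q^{n}$ by $n^{2}$,
\[
\Theta^{2}(\phi_p)=p^{2}q^{-p}-C(p)q+O(q^{3}).
\]
On the other hand, Proposition~\ref{L:Fm3}(2) with $n=1$, together with $\chi(p)=-1$ for $p\equiv 3\pmod 4$, gives
\[
H | T_{3,\chi}(p)=-p^{2}H_p+C(p)g_2=-p^{2}q^{-p}+C(p)q+O(q^{3}).
\]
Comparing $q$-expansions in $M_3^{\infty}(16,\chi)$ as in Sections~\ref{sec:main} and \ref{sec:GKO} (the difference is $O(q^{3})$ and $S_3(16,\chi)$ is one-dimensional) yields $H | T_{3,\chi}(p)=-\Theta^{2}(\phi_p)$, and hence, from \eqref{E:TUV} with $n=1$ and $\chi(p)=-1$,
\[
H| U(p)=-\Theta^{2}(\phi_p)+p^{2}\,H| V(p).
\]

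Applying $U(p^{2})$ repeatedly and using that $U(p^{2})\,V(p)=U(p)$ on $q$-expansions, an induction on $m$ exactly as in the derivations of \eqref{E:Up2} and \eqref{E:Up4m} gives
\[
H| U(p^{2m+1})=-\sum_{k=0}^{m}p^{2(m-k)}\,\Theta^{2}(\phi_p)| U(p^{2k})+p^{2(m+1)}\,H| V(p).
\]
Since $\Theta^{2}$ introduces a factor $n^{2}$ and $U(p^{2k})$ retains only coefficients with index divisible by $p^{2k}$, we have $\Theta^{2}(\phi_p)| U(p^{2k})\equiv 0\pmod{p^{4k}}$, so for $k\ge 1$ the $k$-th summand is divisible by $p^{2(m-k)+4k}=p^{2m+2k}$, hence by $p^{2m+2}$, and the final term is divisible by $p^{2m+2}$ as well. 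Therefore
\[
H| U(p^{2m+1})\equiv -p^{2m}\,\Theta^{2}(\phi_p)\pmod{p^{2m+2}},
\]
and comparing coefficients of $q$ — on the left it is $C(p^{2m+1})$, on the right $-p^{2m}A_p(1)=p^{2m}C(p)$ — gives the lemma.

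I expect the main obstacle to be the construction in the first paragraph: exhibiting a concrete weight $-1$ weakly holomorphic form on $\Gamma_0(16)$ with character $\chi$, holomorphic at every cusp but $\infty$, with the prescribed pole order and support, and verifying that $\phi_p H$ genuinely has no residue at the cusps other than $\infty$, so that the vanishing of its constant term is legitimate. The other point requiring care is bookkeeping of signs: the factor $\chi(p)=-1$ (which has no analogue in the weight-two computation of Section~\ref{sec:main}) and the factor $(-p)^{k-1}=p^{2}$ coming from $\Theta^{2}$ must be tracked consistently, but once the framework of Proposition~\ref{L:Fm3} is in place this is routine.
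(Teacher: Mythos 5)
Your proposal is correct and follows essentially the same route as the paper: construct $\phi_p\in M_{-1}^\infty(16,\chi)$ from $\phi_2$ and powers of $L_2$, extract $A_p(1)=-C(p)$ from the vanishing constant term of the weight-two form $\phi_p H$, identify $-\Theta^2(\phi_p)$ with $H|T_{3,\chi}(p)$ via Proposition~\ref{L:Fm3}, and iterate $U(p^2)$ to get $H|U(p^{2m+1})\equiv -p^{2m}\Theta^2(\phi_p)\pmod{p^{2m+2}}$. The sign and character bookkeeping all check out, so this matches the paper's argument in substance and in detail.
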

\begin{proof}
For each  $l\ge 2$, let $\phi_l\in M_{-1}^\infty(16,\chi)$ be the form given in \cite[Lem. 3.3]{BG}. We have $\phi_2(z)=\frac{\eta^2(8z)}{\eta^4(16z)}$.
For  $l\ge 3$ we have
\[\phi_l(z) = \phi_2(z)P_l(L_2(z)), \]
where  $P_l(x)\in\mathbb{Z}[x]$ has $\deg P_l = l-2.$ Let $p\equiv 3 \pmod 4$ be  prime.  As above we find that
\[\phi_p(z)=q^{-p}-C(p)q+\sum_{n\ge 5}A_p(n)q^n.\]
It  follows from Proposition~\ref{P:HD} that
\begin{equation}\label{E:ph3}
\Theta^2(\phi_p) = p^2q^{-p}-C(p)q+O(q^5) \in M_3^\infty(16,\chi),
\end{equation}
and we deduce using Proposition \ref{L:Fm3} that
\[H| U(p) = H| T_{3,\chi}(p)+p^2 H| V(p) = -\Theta^2(\phi_p)+p^2 H| V(p).\]
Iteratively applying $U(p^2)$ results in
\begin{equation*}
H | U(p^{2m+1}) =-\sum_{l=0}^m p^{2(m-l)}\Theta^2(\phi_p)| U(p^{2l})+p^{2(m+1)}H| V(p),
\end{equation*}
so we have
\begin{equation}\label{E:Up3}
H | U(p^{2m+1}) \equiv -p^{2m}\Theta^2(\phi_p) \pmod {p^{2m+2}}.
\end{equation}
Comparing coefficients gives the result.
\end{proof}

\begin{lemma}\label{L:Cp3d}
For every prime $p\equiv 3 \pmod 4$ we have $p\nmid C(p).$
\end{lemma}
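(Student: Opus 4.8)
The plan is to mimic the contradiction arguments used in Lemmas~\ref{L:Cp2} and \ref{L:Cp4d}, adapting them to the odd weight $3$ and nontrivial character setting. Suppose $p \equiv 3 \pmod 4$ is prime with $p \mid C(p)$. Then \eqref{E:ph3} together with Proposition~\ref{L:Fm3}(2) gives $\Theta^2(\phi_p) = -H|T_{3,\chi}(p) = -\chi(p)p^2 H_p - C(p)g_2 \equiv 0 \pmod p$, since $p \mid C(p)$ and $p \mid p^2$. Feeding this back through the definition of $U(p)$ and $V(p)$, one sees that $\phi_p \equiv q^{-p} + \sum_{n\ge 1} A_p(np) q^{np} \pmod p$; that is, modulo $p$ the $q$-expansion of $\phi_p$ is supported (apart from the pole) on exponents divisible by $p$.

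Next I would clear the pole by multiplying by a suitable power of $\phi_2(z) = \eta^2(8z)/\eta^4(16z)$, which is holomorphic and nonvanishing on the upper half-plane and at the cusps of $\Gamma_0(16)$ and lies in $M_{-1}^\infty(16,\chi)$. Since $\phi_2$ has a simple zero at $\infty$, the form $h_p := \phi_p \,\phi_2^{-p}$ lies in $M_{3p - p \cdot(-1)}$... wait, more carefully: $\phi_p$ has weight $-1$, so $h_p = \phi_p\,\phi_2^{1-p}$ has weight $-1 + (-1)(1-p) = p - 2$ and character $\chi^{p}\cdot \chi^{1-p} = \chi^{p + 1 - p}$... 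I should instead take $h_p := \phi_p\,\phi_2^{-p}$, of weight $-1 + p$ and character $\chi\cdot\chi^{-p} = \chi^{1-p} = 1$ (as $p$ is odd, $1-p$ is even), so $h_p \in M_{p-1}(16)$ with trivial character. Since $\phi_2 = q^{\,?} + \cdots$ has leading term $q$ times a unit, $\phi_2^{-p}$ has leading term $q^{-p}$, so $h_p \equiv q^{-p}\cdot q^{-p}\cdot(\text{unit}) + \cdots$; one checks the pole order works out so that $h_p \equiv q^{p}+\cdots \pmod p$ with all exponents divisible by $p$. Hence $h_p \equiv h_p|U(p)|V(p) \pmod p$, and by \eqref{E:fil1} we get $w_p(h_p) = p\cdot w_p(h_p|U(p))$. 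Since $w_p(h_p) \equiv p - 1 \equiv 0 \pmod{p-1}$ and $p \mid w_p(h_p)$, and $h_p|U(p) \equiv q + \cdots$ is a nonzero holomorphic form of weight dividing out to a small value, the only consistent possibility is $w_p(h_p) = 0$, forcing $h_p$ to be congruent to a constant mod $p$ — contradicting that its $q$-expansion starts at $q^p$ (or, depending on the bookkeeping, one lands in $M_{p-1}(16)$ needing $w_p(h_p|U(p)) = (p-1)/p \notin \Z$, an immediate contradiction).

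The main obstacle I anticipate is the bookkeeping with the nontrivial character $\chi$ and the odd weight: one must verify that the twist by $\phi_2^{-p}$ really does produce a form of \emph{trivial} character and \emph{even} weight (so that the filtration machinery of \cite{Joch}, stated for $M_k(N)$ with trivial character, applies), and that $p \nmid 6N = 96$ — which fails for $p = 3$. So the argument as written handles $p \ge 7$, and the case $p = 3$ (where $3 \equiv 3 \pmod 4$) must be checked separately by direct computation: one verifies $C(3) = -2$ directly from the $q$-expansion of $H = g_2 L_2^2$, so $3 \nmid C(3)$. Apart from this, the pole-order computation for $h_p$ and the precise value of $w_p(h_p|U(p))$ need to be pinned down, but these are routine once the weight and character are confirmed.
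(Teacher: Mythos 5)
Your strategy is essentially the paper's: from $p\mid C(p)$ deduce $\Theta^2(\phi_p)\equiv 0\pmod p$, hence that $\phi_p$ is supported mod $p$ on exponents divisible by $p$; clear the pole by multiplying by a $p$-th power to get a holomorphic form $h_p$ with $h_p\equiv h_p|U(p)|V(p)\pmod p$; then exploit the filtration identity \eqref{E:fil1}. The one genuine difference is the choice of multiplier: the paper takes $h_p=\phi_p f^p$ with $f=\eta^{12}(16z)/\eta^6(8z)=\phi_2^{-3}\in M_3(16,\chi)$, landing in $M_{3p-1}(16)$, where the congruences force $w_p(h_p)=2p$ and $w_p(h_p|U(p))=2$, and one must then check by inspecting a basis that no form in $M_2(16)$ is $\equiv q^5+\cdots\pmod p$. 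Your choice $h_p=\phi_p\phi_2^{-p}\in M_{p-1}(16)$ (trivial character, even weight, as you verify) is arguably cleaner: since $w_p(h_p)\equiv 0\pmod{p-1}$, $p\mid w_p(h_p)$, and $w_p(h_p)\le p-1<p$, one gets $w_p(h_p)=0$ outright, contradicting $h_p\equiv q^p+\cdots$, exactly as in the paper's level-$9$ argument (Lemma~\ref{L:Cp4d}); no basis computation is needed. Your observation about $p=3$ (the filtration machinery is stated for $p\nmid 6N$, and $3\mid 96$) is a fair point that the paper itself glosses over, and your direct check $C(3)=-2$ disposes of it.

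That said, your bookkeeping on $\phi_2$ is wrong and must be fixed before the argument stands. From the eta quotient, $\phi_2=\eta^2(8z)/\eta^4(16z)=q^{-2}+\cdots$ has a \emph{double pole} at $\infty$ (it lies in $M_{-1}^\infty(16,\chi)$), not a simple zero; consequently $\phi_2^{-p}=q^{2p}+\cdots$, not $q^{-p}+\cdots$, and the leading term of $h_p$ is $q^{-p}\cdot q^{2p}=q^{p}$ — your stated conclusion is correct but your computation $q^{-p}\cdot q^{-p}$ would give $q^{-2p}$. Two further details you wave at but must supply: (i) $\phi_2$ is nonvanishing on the upper half-plane \emph{and at the cusps other than} $\infty$ (the latter follows from $\phi_2^{-3}\in M_3(16,\chi)$), so that $\phi_2^{-p}$ really is holomorphic everywhere; (ii) $\phi_2^{-p}\equiv\phi_2^{-1}|V(p)\pmod p$, so that $h_p$ mod $p$ is genuinely supported on exponents divisible by $p$. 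Finally, drop the parenthetical alternative ending: $w_p(h_p)$ is only bounded above by $p-1$, not equal to it, so the ``$(p-1)/p\notin\Z$'' contradiction is not available; the $w_p(h_p)=0$ ending is the correct one.
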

\begin{proof}
Suppose by way of contradiction that $p\mid C(p).$ Then  \eqref{E:ph3} and Lemma~\ref{L:Fm3} show that
$\Theta^2(\phi_p) \equiv 0 \pmod p,$
whence \[\phi_p \equiv q^{-p}+\sum_{n\ge 1}A_p(np)q^{np} \pmod p.\]
Let $f(z)=\frac{\eta^{12}(16z)}{\eta^6(8z)}= q^6+6q^{14}+O(q^{22})\in M_3(16,\chi).$
Then $h_p := \phi_p f^p  \in M_{3p-1}(16)$ has the form
\[h_p\equiv  \sum_{n \ge 5p}D_p(pn)q^{pn}\equiv q^{5p}+\cdots\pmod p,\]
so that
\[h_p \equiv h_p| U(p)|V(p) \pmod p.\]
Analyzing the filtration  yields $w_p(h_p) = 2p$ and  $w_p(h_p | U(p)) =2$.
However,  we find by examining a basis that there is no form $h_0\in M_2^{(p)}(16)$ with
$h_0\equiv q^5+\cdots\pmod p$.  This provides the desired contradiction.
\end{proof}
The proof of Theorem~\ref{T:main3} follows as before.

\bibliographystyle{plain}
\bibliography{cusp_padic_limits}
\end{document}